\documentclass[11pt]{article}
\usepackage{amsthm}
\usepackage{amscd}
\usepackage{latexsym}
\usepackage{graphicx}
\usepackage{amsmath}
\usepackage{amsfonts}
\usepackage{float}
\usepackage{amssymb}
\usepackage{enumerate}
\usepackage{multirow}
\usepackage{floatflt}
\usepackage{color}

\oddsidemargin=0pt
\evensidemargin=0pt
\textwidth=6.5in
 \textheight=22.6cm
\topmargin -0,8cm

\headsep=1cm

\newtheorem{theorem}{Theorem}
\newtheorem{lemma}{Lemma}

\newtheorem{definition}{Definition}
\newtheorem{proposition}{Proposition}

\theoremstyle{definition}
\newtheorem{example}{Example}
\theoremstyle{remark}
\newtheorem{remark}{Remark}

\setlength{\unitlength}{1mm}

\begin{document}

\begin{center} \huge\bf
Enlarging a connected graph  while keeping
entropy and spectral radius: self-similarity techniques
\end{center}

\bigskip
\medskip
\begin{center}
\textsc{Alberto Seeger}\footnote{University of  Avignon, Department of Mathematics, 33 rue Louis Pasteur,
84000 Avignon, France (e-mail: aseegerfrance@gmail.com).} \quad and \quad\textsc{David Sossa}\footnote{Universidad de O'Higgins, Instituto de Ciencias de la Ingenier\'ia, Av.\,Libertador Bernardo O'Higgins 611, Rancagua, Chile (e-mail: david.sossa@uoh.cl).   }
\end{center}
\bigskip

\begin{quote}
 {\small \textbf{Abstract}.
 This work   is about self-similar sequences of growing connected graphs. We   explain how to construct such sequences and why they are important.  We show for instance that all the connected graphs in a self-similar sequence have  not only the same entropy, but also the same spectral radius.

\bigskip

{\it Mathematics Subject Classification}: 05C50, 15A42.\\
{\it Key words}:  connected graph, spectral radius,  graph entropy,   automorphism similarity,  orbit partition, orbital similarity.
}\end{quote}

\bigskip

\section{Introduction}\label{se:intro}

Self-similarity is an expression used in topology, statistics, network theory, fluid dynamics, and other areas of mathematics and physics. Such an expression refers to the phenomenon where a certain property of an object  is preserved with respect to scaling.
Informally speaking,  a self-similar object is exactly or approximately similar to a part of itself.
In  graph theory, the idea of self-similarity can be formalized in many ways. Our  definition of graph self-similarity is based  on the concept of  orbital similarity  between graphs.  Of course, the graphs under comparison are not necessarily of the same   order.  In fact, we are specially interested in describing a situation  in which  a small connected graph  looks somehow similar to a big connected graph. The small one is neither an induced subgraph nor a proper subgraph of the big one, but in a way it is a part of it.

For a  smooth introduction  into the topic, consider as toy example a sequence $\{G_k: k\geq 1\}$
of growing connected graphs as those shown in Figure\,\ref{Fig:molecular}.
As we move from left to right,  the successive graphs  in Figure\,\ref{Fig:molecular} get larger in order (number of vertices) and size (number of edges).   Despite a  change in scale, we observe  a certain resemblance  among  all these graphs. If we pick any two of them, say $G_2$ and $G_4$, then a quick computation shows that both graphs not only have the same spectral radius, but they have also plenty  of other  graph invariants in common: maximal degree,  minimal degree,  edge-vertex ratio, and so  on.
    The caption of Figure\,\ref{Fig:molecular} needs of an explanation.   The purpose of this work is precisely to  explain why orbitally similar graphs have the same entropy and the same spectral radius. We  explain also  how to construct such sequences of growing connected graphs. As just mentioned, there are plenty of properties shared by  the $G_k$'s in Figure\,\ref{Fig:molecular}.   Some of these common properties are a consequence of the very definition  of orbital similarity that is  proposed.   One should accept however  some unavoidable divergences between the $G_k$'s due to a change in  scale.
It is hopeless trying to preserve  for instance   the  standard density index, cf.\,Proposition\,\ref{pr:density},  and  other  graph invariants.
\begin{figure}[!ht]
     \centering
    \includegraphics[width=0.75\textwidth]{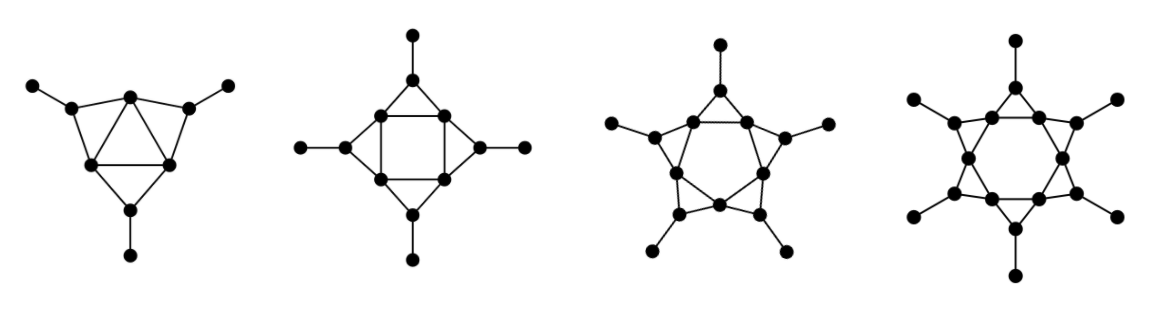}\put(-112,0){$G_1$}\put(-81,0){$G_2$}\put(-49,0){$G_3$}\put(-17,0){$G_4$}
    \caption{\small   Orbitally similar graphs. In particular, they have  equal entropy and equal spectral radius. }\label{Fig:molecular}
    \end{figure}

    The organization of the paper is as follows. Section\,\ref{se:background}  settles  the  background and introduces the concept of  orbital similarity between graphs. With the help of such a concept, we explain what  means a self-similar sequence of connected graphs emanating from a given connected graph called  the  seed. The seed plays the role of a prototype graph that we wish to reproduce  or replicate {\it ad infinitum}.  Section\,\ref{se:os} is devoted to the analysis of orbital similarity  as a concept of interest by itself.  Section\,\ref{se:construct} proposes  several practical mechanisms for constructing self-similar sequences.
 Section\,\ref{se:preserve} gives a list of graph invariants  that are always  preserved in a self-similar sequence and  it mentions also some  graph invariants that are preserved only under additional assumptions on the seed.

\section{Setting the background}\label{se:background}
All graphs are assumed  to be  finite,  undirected, without loops and multiple edges.
 A basic ingredient of the discussion is the classical concept of automorphism similarity between vertices. As in   Harary and  Palmer\,\cite{HP},  two vertices   $u$ and $v$ of a graph $G$  are  {\it automorphically similar}  if $\varphi(u)=v$ for some  $\varphi\in {\rm Aut}(G)$, where  ${\rm Aut}(G)$ is the group of automorphisms of $G$.
Several equivalent characterizations of automorphism similarity are proposed in Seeger and Sossa\,\cite[Theorem\,2.1]{SeSo1}.
Automorphism similarity is an equivalence relation on the vertex set  of a graph. The automorphism similarity classes are called {\it orbits }  of the graph.  In other words, an orbit of a  graph is the set of all vertices automorphically similar to a given vertex.  By abuse of language, the set
\begin{equation}\label{partition}
{\rm Orb}(G):=\{\mathcal O_1,\ldots,  \mathcal O_\ell\}
 \end{equation}
of orbits is  called the  {\it orbit partition} \color{black} of $G$. It is of course the  vertex set
$V_G:=\{v_1, \ldots, v_n\}$
 that  is being partitioned  and not the graph itself.  The degree of a vertex $v\in V_G$ is denoted by $d_G(v)$. Vertices on a same orbit  not only have the same degree,   but also other properties in common (for instance, they have the  same neighborhood degree sequence, cf.\,\cite[Theorem\,3.1]{SeSo1}).
The  number $\ell$ of orbits is an interesting structural parameter of a graph.  If $\ell$ is small
compared to the number $n$ of vertices, then the graph can be  viewed as highly symmetric. One extreme case of
symmetry is a graph with only one orbit. Such sort of  graph is called {\it vertex-transitive}.  The other extreme is a graph with as many orbits as vertices. In the  latter case, the graph is called {\it asymmetric}.    The first graph  in Figure\,\ref{Fig:partition} is vertex-transitive, the second one  is asymmetric, and the last one corresponds to an intermediate situation: the number of orbits is greater than one, but smaller than the number of vertices.
\begin{figure}[!ht]
     \centering
    \includegraphics[width=0.6\textwidth]{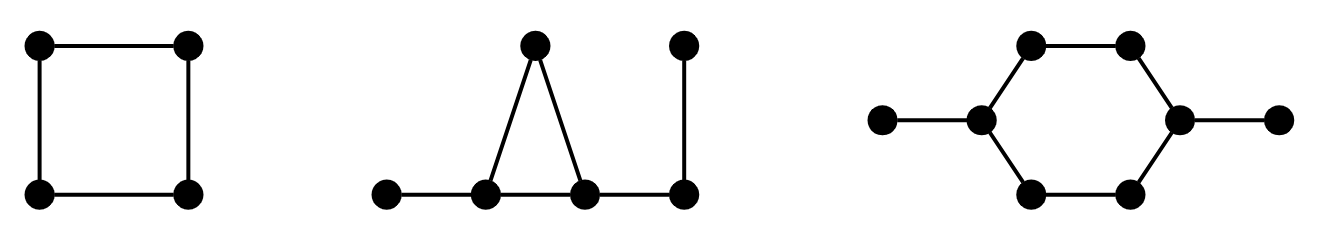}
    \put(-98,-1){\scriptsize$\alpha_1$}\put(-98,17){\scriptsize$\alpha_1$}\put(-86,17){\scriptsize$\alpha_1$}\put(-86,-1){\scriptsize$\alpha_1$}\put(-60,17){\scriptsize$\alpha_6$}\put(-49,17){\scriptsize$\alpha_1$}\put(-49,-1){\scriptsize$\alpha_2$}\put(-57,-1){\scriptsize$\alpha_3$}\put(-65,-1){\scriptsize$\alpha_4$}\put(-73,-1){\scriptsize$\alpha_5$}\put(-16,17){\scriptsize$\alpha_1$}\put(-16,-1){\scriptsize$\alpha_1$}\put(-25,-1){\scriptsize$\alpha_1$}\put(-25,17){\scriptsize$\alpha_1$}\put(-12,5){\scriptsize$\alpha_2$}\put(-29,5){\scriptsize$\alpha_2$}\put(-5,5){\scriptsize$\alpha_3$}\put(-36,5){\scriptsize$\alpha_3$}
\caption{\small   Vertex-transitivity,  asymmetry, and   partial symmetry. }\label{Fig:partition}
    \end{figure}

It is  helpful to view   the Greek letters $\alpha_1,\ldots, \alpha_\ell$  as being  colors, for instance
    \begin{equation}\label{colors}
 (\alpha_1,\alpha_2,\alpha_3,\ldots, \alpha_\ell) \,=\, (\mbox{blue}, \mbox{green}, \mbox{yellow}, \ldots, \mbox{red}).
  \end{equation}
  In such a case, a vertex  indicated   with  the letter $\alpha_1$ is a blue  vertex, and so on.  The number $\ell$ of orbits corresponds to the numbers of colors used to paint the vertices. The smaller the number of colors, the higher the degree of symmetry of the  graph.   If the  graph $G$  is connected, then the $\alpha_i$'s can be viewed as components of the  principal eigenvector (or Perron eigenvector)   of $G$.
We  briefly explain the link between the orbit partition and the principal eigenvector of a connected graph.
  Let $A_G$ be the adjacency matrix of $G$. This is a matrix of order $n=\vert G\vert$. It is sometimes convenient to index the entries of such a matrix  with the vertices of $G$, in which case  $A_G:V_G\times V_G\to \mathbb{R}$ is a function  given by
  $ A_G(u,v)= 1$ if $\{u,v\}$ is an edge,  and  $ A_G(u,v)= 0$ otherwise.
  Let $\varrho (G)$ be the spectral  radius of $G$.
  The  {\it principal eigenvector} of a connected graph $G$ is   denoted by $x_G$ and it is defined as the unique  function $x:V_G\to \mathbb{R}$ satisfying
  \begin{equation}\label{functional}
    \sum_{v\in V_G}A_G(u,v) x(v)= \varrho(G)\, x(u)
  \end{equation}
for all $u\in V_G$,  together with  the normalization  condition
   $\sum_{v\in V_G}x(v)= 1$.
  The  existence and uniqueness of $x_G$ is guaranteed by the Perron-Frobenius theorem.
  Instead of viewing $x_G$ as a function on $V_G$, we can equally well see $x_G$ as an $n$-dimensional column vector.  For this, it suffices to write the functional equation  (\ref{functional}) in the matrix format
   $ A_G x =\varrho(G)\, x. $
The normalization condition simply says that the components of $x\in \mathbb{R}^n$ sum up to one. Such components are positive of course.  The following proposition is part of the folklore in spectral graph theory.

  \begin{proposition} \label{pr:perron}  Let $u$ and $v$ be automorphically similar vertices of a connected graph $G$. Then $x_G(u)= x_G(v)$.
  \end{proposition}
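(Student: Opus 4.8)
The plan is to exploit the uniqueness clause in the Perron--Frobenius theorem, which has already been invoked to define $x_G$. Since $u$ and $v$ are automorphically similar, I fix an automorphism $\varphi\in {\rm Aut}(G)$ with $\varphi(u)=v$. The strategy is to show that the function obtained by transporting $x_G$ along $\varphi$ is again a principal eigenvector, so that uniqueness forces it to coincide with $x_G$ itself; evaluating the resulting invariance at $u$ then yields the claim.

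Concretely, I would introduce the transported function $y\colon V_G\to\mathbb{R}$ defined by $y(w)=x_G(\varphi^{-1}(w))$ and verify that $y$ satisfies the functional equation (\ref{functional}) with the same spectral radius $\varrho(G)$. The one identity doing the real work is that $\varphi$, being an automorphism, preserves adjacency, i.e.\ $A_G(u,\varphi(w))=A_G(\varphi^{-1}(u),w)$ for all $u,w\in V_G$. First I would compute $\sum_{w\in V_G}A_G(u,w)\,y(w)$, reindex the sum through the bijection $w=\varphi(w')$, apply the displayed adjacency identity, and recognize the result as $\varrho(G)\,x_G(\varphi^{-1}(u))=\varrho(G)\,y(u)$. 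Thus $y$ is an eigenvector of $A_G$ associated with the eigenvalue $\varrho(G)$.

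It then remains to check the side conditions that single out the principal eigenvector. Since $\varphi^{-1}$ is a bijection of $V_G$ and $x_G$ is positive with components summing to one, the same two properties hold for $y$: reindexing gives $\sum_{w\in V_G}y(w)=\sum_{w'\in V_G}x_G(w')=1$, and positivity is immediate. By the uniqueness part of the Perron--Frobenius theorem, $y=x_G$, that is $x_G\circ\varphi^{-1}=x_G$, or equivalently $x_G(\varphi(w))=x_G(w)$ for every $w\in V_G$. Taking $w=u$ gives $x_G(v)=x_G(\varphi(u))=x_G(u)$, as desired.

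I do not anticipate a genuine obstacle here; the argument is essentially a symmetry-plus-uniqueness routine. The only point requiring a little care is the reindexing step, where one must use that $\varphi$ and $\varphi^{-1}$ are \emph{automorphisms} (and not merely bijections), so that the adjacency values are preserved under the change of variable; everything else is bookkeeping with the normalization condition.
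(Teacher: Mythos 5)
Your proof is correct. The paper states Proposition~\ref{pr:perron} without proof, calling it ``part of the folklore in spectral graph theory,'' and your symmetry-plus-uniqueness argument --- transporting $x_G$ along $\varphi$, checking the eigenvector equation via the adjacency-preserving identity, verifying positivity and the normalization, and invoking the uniqueness clause of Perron--Frobenius --- is exactly the standard folklore proof one would supply.
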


Proposition\,\ref{pr:perron} asserts that $x_G$ is constant on each orbit of $G$.  This result is specially useful when $G$ has only  few orbits, because in such a case there are only few constants to be determined for knowing the entire principal eigenvector.
The reverse of Proposition\,\ref{pr:perron} is not true, i.e., the equality  $x_G(u)= x_G(v)$ does not imply that $u$ and $v$ are automorphically similar. Think for instance of a regular connected graph that is not vertex-transitive.

\subsection{Orbital similarity between graphs}\label{ss:orbital-similarity}

We now explain what does it mean that two graphs $G$ and $H$ of possibly different order are orbitally similar. For better understanding our motivation, think of $G$ as a connected graph with  a few vertices and $H$ as a connected graph with a large number of vertices.
Two connected graphs can be orbitally similar despite  being of a completely different scale. Orbital similarity is not a matter of order or size, but a matter of orbit structure.  To start with, orbital similarity requires both graphs to have the same number of orbits, say
\begin{equation*}
{\rm Orb}(G)=\{\mathcal O_1,\ldots,  \mathcal O_\ell\}\,,\;\; {\rm Orb}(H)=\{\mathcal Q_1,\ldots,  \mathcal Q_\ell\}.
 \end{equation*}
But much more is needed.  Two graphs with the same number of orbits may look quite different after all.  Figure\,\ref{Fig:3orbits}  displays a sample $\{Z_1,\ldots, Z_8\}$  of connected graphs  with exactly $3$ orbits each.  A few pairs look similar, but most pairs look dissimilar.
\begin{figure}[!ht]
     \centering
       \includegraphics[width=0.7\textwidth]{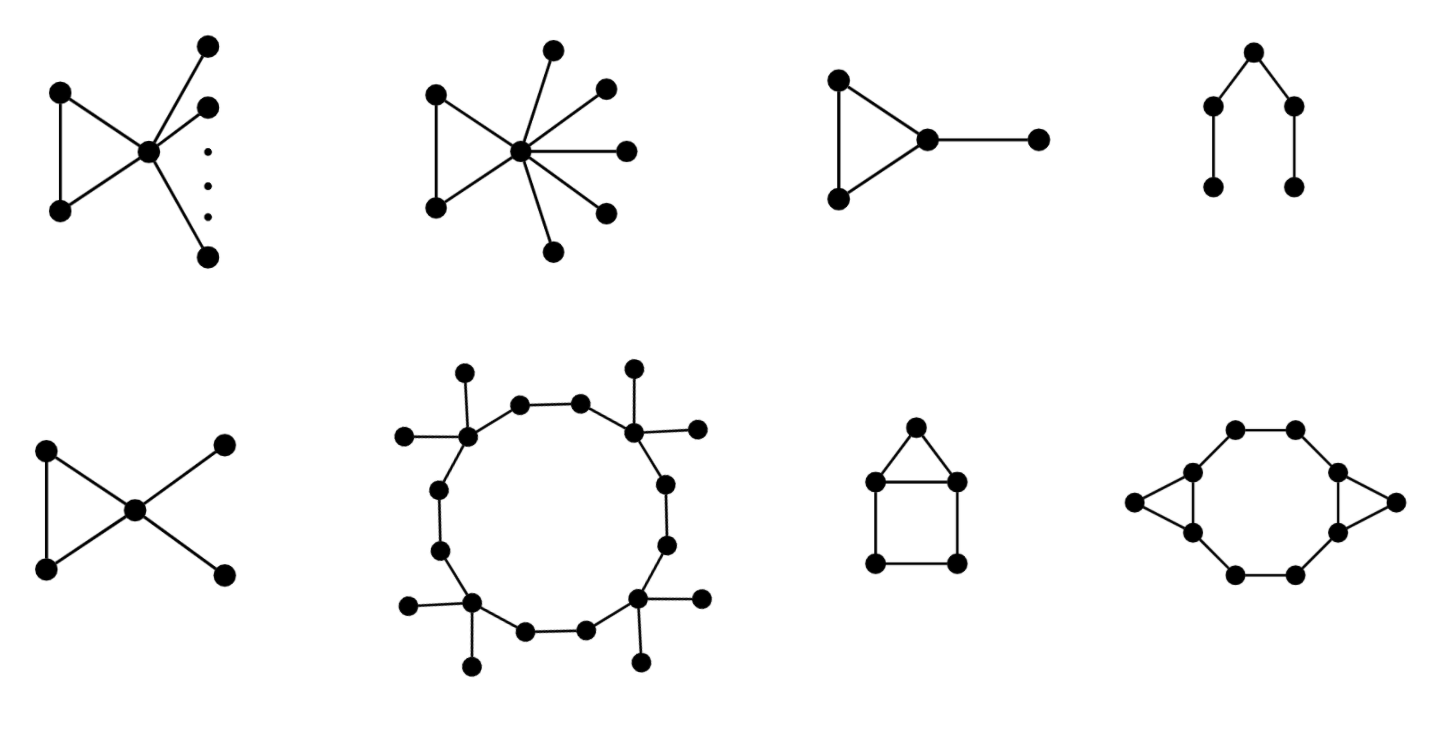}\put(-96,55){\scriptsize$1$}\put(-96,50){\scriptsize$2$}\put(-97,38){\scriptsize$17$}\put(-105,33){$Z_1$}\put(-75,33){$Z_2$}\put(-43,33){$Z_3$}\put(-17,33){$Z_4$}
       \put(-105,2){$Z_5$}\put(-75,2){$Z_6$}\put(-43,2){$Z_7$}\put(-17,2){$Z_8$}
    \caption{\small  A sample of  connected graphs with $3$ orbits each. }\label{Fig:3orbits}
    \end{figure}
In addition to the  equality $\vert {\rm Orb}(G)\vert = \vert {\rm Orb}(H)\vert$,  we should impose  certain relations between the  orbits of $G$ and  $H$.
Let the  {\it orbit distribution vector} \color{black}  of $G$ be defined as
 \begin{equation}\label{odv}
\omega(G):= \left (\frac{\vert \mathcal O_1\vert}{\vert G\vert},\ldots , \frac{\vert \mathcal O_\ell\vert}{\vert G\vert}\right)^\downarrow,
\end{equation}
 where  $\xi^\downarrow$ denotes the nonincreasing rearrangement  of  $\xi\in \mathbb{R}^\ell$. Of course, the down-arrow superscript  in (\ref{odv}) can be dropped  if  the orbits  of $G$ are labeled so that
\begin{equation}\label{cardi}
 \vert \mathcal O_1\vert \geq \ldots \geq \vert  \mathcal O_\ell\vert\,.
 \end{equation}
 The components of $\omega(G)$ are positive and sum up to one.  Such a probability   vector
determines the entropy  of $G$, which is a graph invariant given by
 \begin{equation}\label{entropy}
 {\rm Ent}(G):=\Phi(\omega(G))\,
 \end{equation}
 with $\Phi(\xi):=-\sum_{i=1}^\ell\xi_i\log \xi_i$. The  logarithm is base $2$, but any another base is also acceptable.
 See Mowshowitz\,\cite{Mow} and Mowshowitz and Mitsou\,\cite{MM} for a discussion on  the expression (\ref{entropy}) and the survey paper of Dehmer and Mowshowitz\,\cite{DM} for general information on graph entropy measures.   The orbit distribution vector and the entropy of $H$ are defined analogously.
 If $G$ and $H$ were to be  orbitally similar, then it makes sense to  request
 \begin{equation}\label{request}
 \omega(G)=  \omega(H).
 \end{equation}
 If the orbits of $G$ and $H$ are labeled as in (\ref{cardi}), then equality (\ref{request})
is a short way of saying that each  $\mathcal O_i$   has the same relative size as the corresponding $\mathcal Q_i$.  Note that we are not comparing  $\vert \mathcal O_i\vert$ and  $\vert \mathcal Q_i\vert$, but the ratios $\vert \mathcal O_i\vert/\vert G\vert$ and $\vert \mathcal Q_i\vert/\vert H\vert$.
Let us have a closer look at the graphs of  Figure\,\ref{Fig:3orbits}.  As shown in  Table\,\ref{Tab:entropy}, such graphs are arranged in fact by nondecreasing level  of entropy.
 \begin{table}[htbp]
\begin{center}
\begin{small}
\scriptsize
\caption{\small   Orbit distribution vector and entropy of the graphs in Figure\,\ref{Fig:3orbits}.  }\label{Tab:entropy}
\bigskip
\begin{tabular}{|c| c | c | c | c | c | c | c | c |c | c | c | c |c | c | c | c |c | c | c | c |c | c | c | c |c | c | c | c |}
\hline
G          &$\vert G\vert$ &  $\omega(G)$         & ${\rm Ent}(G)$     \\
\hline
$Z_1$      &  20           & $(17/20, 2/20, 1/20)$   &  0.7476           \\
$Z_2$      &   8           & $(5/8, 2/8, 1/8)$       &  1.2988           \\
$Z_3$      &   4           & $(2/4, 1/4, 1/4)$       &  1.5000            \\
$Z_4$      &   5           & $(2/5, 2/5, 1/5)$       &  1.5219            \\
$Z_5$      &   5           & $(2/5, 2/5, 1/5)$       &  1.5219           \\
$Z_6$      &  20           & $(8/20, 8/20, 4/20)$    &  1.5219           \\
$Z_7$      &   5           & $(2/5, 2/5, 1/5)$       &  1.5219           \\
$Z_8$      &  10           & $(4/10, 4/10, 2/10)$    &  1.5219          \\
\hline
\end{tabular}
\end{small}
\end{center}
\end{table}
 The graph $Z_1$ has a much lower entropy than all others. Without hesitation,  we declare $Z_1$  dissimilar from the others.  Since $Z_2$ has  its  own  entropy level,  this graph is also considered as dissimilar from the others.  The same remark applies to $Z_3$, even if   ${\rm Ent}(Z_3)$ is not too far  from ${\rm Ent}(Z_4)$.  Concerning the graphs from $Z_4$ to $Z_8$ inclusive,  we are momentarily in a troublesome situation. These  $5$ graphs have not only the same entropy, but also the same orbit distribution vector.  We feel  however that
 $Z_4$ is dissimilar from $Z_5$.  Having the same orbit distribution vector does not seem to be strong enough to recover an intuitive notion of similarity.  For this reason,  we give little  credit to the orbit distribution vector and put our faith in the  orbit divisor matrix. This point is explained   next. As it  is well known,  orbit partitions  are  equitable partitions, cf.\,\cite[Proposition\,3.2]{BFW}.  That (\ref{partition}) is an equitable partition means that, for all $i,j\in \{1,\ldots, \ell\}$, the number
\begin{equation}\label{defSG}  s_{i,j}\,=\,\sum_{v\in \mathcal O_j}A_G(u,v)
\end{equation}
of vertices in $\mathcal O_j$ that are adjacent to $u\in \mathcal O_i$  does not depend on the choice of the  vertex $u$ in $\mathcal O_i$. Each  vertex of color $\alpha_i$ is adjacent to exactly $ s_{i,j}$  vertices of color  $\alpha_j$.  For instance,  if we use the color convention (\ref{colors}), then each  blue vertex is adjacent to exactly
 $s_{1,1}$ blue vertices,
 $s_{1,2}$  green  vertices,
 $s_{1,3}$ yellow vertices,
   and so on. Beware that  $s_{i,j}$ is not necessarily equal to $s_{j,i}$.
The possibly non-symmetric  matrix
 $ S_G:=[s_{i,j}]$
  reflects  the orbit structure of the graph $G$.  In the terminology of  Barrett et al.\,\cite[Definition\,2.1]{BFW}, $S_G$ corresponds to the divisor matrix (or quotient matrix) associated to the orbit partition of $G$.   For the sake of brevity, we call $S_G$ the {\it orbit divisor matrix}  of $G$.
 \begin{example} \label{ex:same}
 The orbits of the graphs $G$ and $H$ shown in Figure\,\ref{Fig:relabeling} are
  \begin{eqnarray*}
   &&\mathcal O_1= \{v_1, v_5\},\hskip 1,5cm\mathcal O_2= \{v_3, v_4\},\hskip 1,4cm \mathcal O_3= \{ v_2\}, \\
    && \mathcal Q_1= \{u_1, \tilde u_1, u_5, \tilde u_5\},\; \;\, \mathcal Q_2= \{u_3, \tilde u_3, u_4, \tilde u_4\}, \;\; \mathcal Q_3= \{ u_2,\tilde u_2\}\,,
     \end{eqnarray*}
    respectively. For each $i\in \{1,2,3\}$,   $\mathcal Q_i$ has twice as many elements as  $\mathcal O_i$. We get
$$\omega(G)= (2/5, 2/5, 1/5)=(4/10, 4/10, 2/10)= \omega(H).$$ In this example,  $G$ and $H$ have not only the same orbit distribution vector, but also the same orbit divisor  matrix, namely,
\begin{equation}\label{SGSH}
 S_G= S_H = \left[
                \begin{array}{ccc}
                  1 & 0 & 1 \\
                    0 & 0 & 1 \\
                      2 & 2 & 0 \\
                \end{array}
              \right].
\end{equation}
\end{example}
\begin{figure}[!ht]
     \centering
   \includegraphics[width=0.55\textwidth]{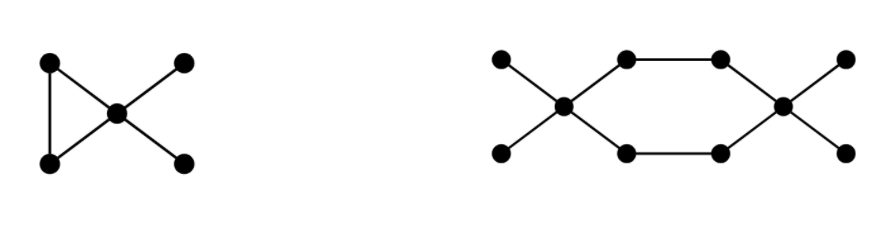}\put(-87,19){\scriptsize$v_1$}\put(-80,14){\scriptsize$v_2$}\put(-73,19){\scriptsize$v_3$}\put(-87,3){\scriptsize$v_5$}\put(-73,3){\scriptsize$v_4$}\put(-40,19){\scriptsize$\widetilde{u}_3$}\put(-35,14){\scriptsize$\widetilde{u}_2$}\put(-29,19){\scriptsize$\widetilde{u}_1$}\put(-29,3){\scriptsize$\widetilde{u}_5$}\put(-40,3){\scriptsize$\widetilde{u}_4$}\put(-19,19){\scriptsize$u_1$}\put(-19,3){\scriptsize$u_5$}\put(-13,14){\scriptsize$u_2$}\put(-6,19){\scriptsize$u_3$}\put(-6,3){\scriptsize$u_4$}
    \caption{\small   Both graphs have the same  orbit divisor matrix.  }\label{Fig:relabeling}
    \end{figure}
 \begin{example}\label{ex:path}
    Consider  the path  $P_5$ with  vertices $\{v_1,\ldots,v_5\}$ arranged in the  usual way. We have
$\mathcal O_1= \{v_1, v_5\}$, $\mathcal O_2= \{v_2, v_4\}$, and $\mathcal O_3= \{ v_3\}$.
The  orbit distribution vector
 $\omega(P_5)= (2/5, 2/5, 1/5)$ is as in Example\,\ref{ex:same}, but the orbit divisor matrix
\begin{equation}\label{Spath} S_{P_5}= \left[
                \begin{array}{ccc}
                  0 & 1 & 0 \\
                    1 & 0 & 1 \\
                    0 & 2 & 0 \\
                \end{array}
              \right]
 \end{equation}
 is different from (\ref{SGSH}).
\end{example}
 \vskip 0,1cm

 The next definition is restricted to connected graphs, but, in principle,  disconnected graphs could also be accommodated.

 \begin{definition} \label{de:os}
 Two  connected graphs $G$ and $H$ are  orbitally similar if
 $S_G=S_H.$
 \end{definition}
 \vskip 0,1cm
  A technical remark concerning the equality  $S_G=S_H$ is in order.
 Suppose that the orbits $\mathcal O_1,\ldots, \mathcal O_\ell$ of  the graph $G$ are labeled as in  (\ref{cardi}), i.e., nonincreasingly with respect to cardinality. If the sizes of the orbits are all different, then  there is no ambiguity with such a  labeling strategy.  However, if there are at least two orbits of equal size, then the $\ell$-tuple
$ (\mathcal O_1,\ldots, \mathcal O_\ell )$ is not defined in a unique way. For instance, in  Example\,\ref{ex:path}, we could have  chosen
\begin{eqnarray*}
\mathcal O_1^\prime &=& \mathcal O_{\pi(1)}\;=\; \mathcal O_{2}\;=\;\{v_2, v_4\}, \\
\mathcal O_2^\prime&=&  \mathcal O_{\pi(2)}\;=\; \mathcal O_{1}\;=\;\{v_1, v_5\}, \\
\mathcal O_3^\prime &=& \mathcal O_{\pi(3)}\;=\; \mathcal O_{3}\;=\; \{ v_3\}.
\end{eqnarray*}
The corresponding orbit divisor matrix
\begin{equation*}  \left[
                \begin{array}{ccc}
                  0 & 1 & 1 \\
                    1 & 0 & 0 \\
                    2 & 0 & 0 \\
                \end{array}
              \right]
 \end{equation*}
 is  the same as (\ref{Spath}), but only  up to permutation similarity transformation.
 This observation is formalized in the next easy proposition.
 \begin{proposition}\label{pr:permutation}
 Let $G$ be a connected graph with orbits  $\mathcal O_1,\ldots,  \mathcal O_\ell$ labeled in any order. Let $S_G$ and $S_G^\pi$ be the orbit divisor matrices relative to   $ (\mathcal O_1,\ldots, \mathcal O_\ell )$ and  $ (\mathcal O_{\pi(1)},\ldots, \mathcal O_{\pi(\ell)}) $, respectively, where  $\pi$ is  any permutation of $\ell$ elements.
Then $S_G^\pi =P^\top S_G P$, where $P$ is the permutation matrix associated to $\pi$.
 \end{proposition}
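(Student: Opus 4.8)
The plan is to argue directly at the level of matrix entries, using the definition (\ref{defSG}) of the orbit divisor matrix. First I would record how the entries of the reordered matrix relate to those of the original. By definition, the $(i,j)$ entry of $S_G^\pi$ is obtained by taking any vertex $u$ in the $i$-th orbit of the relabeled list $(\mathcal O_{\pi(1)},\ldots,\mathcal O_{\pi(\ell)})$, that is $u\in \mathcal O_{\pi(i)}$, and counting its neighbors lying in the $j$-th orbit of that list, namely $\mathcal O_{\pi(j)}$. Comparing with (\ref{defSG}), this count is exactly $s_{\pi(i),\pi(j)}$, so that $(S_G^\pi)_{i,j}=(S_G)_{\pi(i),\pi(j)}$ for all $i,j$. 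This step relies only on the fact that orbit partitions are equitable, which is what makes the count independent of the chosen representative $u$ in the first place.

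It then remains to recognize the right-hand side as a conjugation by a permutation matrix. I would fix the convention that $P$ is the permutation matrix with entries $P_{r,s}=1$ if $r=\pi(s)$ and $P_{r,s}=0$ otherwise, so that $P$ carries the $s$-th coordinate vector to the $\pi(s)$-th one. A direct computation gives $(P^\top S_G P)_{i,j}=\sum_{r,s}P_{r,i}\,(S_G)_{r,s}\,P_{s,j}$, and since $P_{r,i}$ vanishes unless $r=\pi(i)$ and $P_{s,j}$ vanishes unless $s=\pi(j)$, only the single term with $r=\pi(i)$ and $s=\pi(j)$ survives, yielding $(P^\top S_G P)_{i,j}=(S_G)_{\pi(i),\pi(j)}$. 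Combining this with the identity from the previous paragraph gives $S_G^\pi=P^\top S_G P$, as claimed.

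The argument is essentially pure bookkeeping, and the only point requiring care, the main place where one could slip, is the choice of permutation-matrix convention: the two natural conventions differ by passing to $\pi^{-1}$, so that one of them produces $(S_G)_{\pi(i),\pi(j)}$ while the other produces $(S_G)_{\pi^{-1}(i),\pi^{-1}(j)}$. Once the convention is pinned down so that conjugation by $P$ implements the simultaneous permutation of rows and columns by $\pi$, the identity follows immediately. No connectivity, Perron-Frobenius, or spectral input is needed beyond the equitability of the orbit partition already invoked in (\ref{defSG}).
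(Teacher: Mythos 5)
Your proof is correct and follows essentially the same route as the paper: you fix the same convention (the $s$-th column of $P$ is $e_{\pi(s)}$) and verify $(P^\top S_G P)_{i,j}=(S_G)_{\pi(i),\pi(j)}$ by the same conjugation computation, merely written as an entry-wise double sum instead of via $Pe_i=e_{\pi(i)}$. The only difference is that you spell out the definitional step $(S_G^\pi)_{i,j}=s_{\pi(i),\pi(j)}$ and the role of equitability, which the paper treats as immediate.
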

 \begin{proof}  The $k$-th column of $P$ is $e_{\pi(k)}$,
 where $\{e_1,\ldots, e_\ell\}$ is the canonical basis  of $\mathbb R^\ell$. Hence,
 \begin{equation*}
 (P^\top S_G P)_{ij}\,=\,(Pe_i)^\top S_G(Pe_j)\,=\,
e_{\pi(i)}^\top S_G e_{\pi(j)}\,=\,s_{\pi(i),\pi(j)}\,=\,(S^\pi_G)_{i,j}
 \end{equation*}
 for all $i,j\in\{1,\ldots,\ell\}$.
\end{proof}

The matrices $S_G$ and $S_G^\pi$ in Proposition\,\ref{pr:permutation} are equal up to  permutation similarity transformation.
The equality  $S_G=S_H$ in Definition\,\ref{de:os} is to be understood relative to suitable choices of
 $ (\mathcal O_1,\ldots, \mathcal O_\ell )$ and $ (\mathcal Q_1,\ldots, \mathcal Q_\ell )$. Once we fix the order of the $\mathcal O_i$'s, we may need to relabel the $\mathcal Q_i$'s for ensuring that $S_G$ and $S_H$  are truly equal and not just equal  up to  permutation similarity transformation. We now are ready to introduce the concept of self-similar sequence.

\begin{definition}\label{de:self} Let $G$ be a connected graph. A  self-similar sequence emanating from $G$ is  a sequence
$\mathbb{G}=\{G_k: k\geq 1\} $
of connected graphs such that:
\begin{eqnarray}\label{c1}
&&\vert G_{1}\vert < \vert G_{2}\vert <\vert G_{3}\vert <\ldots \,,\\ \label{c2}
&&G_1, G_2, G_3,\ldots, \mbox{are pairwisely orbitally similar},\\ \label{c3}
&&G_1  \mbox{ is isomorphic to }G.
\end{eqnarray}
\end{definition}
\vskip 0,2cm

 The graph $G$ in Definition\,\ref{de:self} is called   the {\it seed}  of the sequence $\mathbb{G}$. Up to isomorphism, $G$  is simply the first term  of the sequence.  Condition  (\ref{c2}) can be  reformulated by saying that each $G_k$  is orbitally similar to $G$. Note that we could equally well express (\ref{c2}) by saying that each $G_k$  is orbitally similar to $G_1$.
Sometimes the seed of a self-similar sequence is irrelevant in the discussion: what truly  matters about the sequence $\mathbb{G}$   is the fact of being  self-similar and not from where it emanates.    In such a case, we simply omit mentioning the seed and forget  condition  (\ref{c3}).  Condition (\ref{c1})  says that the $G_k$'s are getting bigger in order. Such a growth condition serves to avoid repetitions among the $G_k$'s and  to ensure that $\mathbb{G}$ is an infinite set.

\section{Analysis of orbital similarity}\label{se:os}
In order to construct and manipulate self-similar sequences, it is necessary to have a good understanding of the concept of orbital similarity between connected graphs. This section is devoted to the analysis of such a  concept.

 \begin{theorem}\label{th:implications} Let $G$ and $H$ be connected graphs. Then each of the following conditions
implies but it is not implied by the next one:
 \begin{itemize}
 \item [(a)] $G$ and $H$ are isomorphic.
 \item [(b)] $G$ and $H$ are orbitally similar and  have the same order.
 \item [(c)] $G$ and $H$ are orbitally similar.
 \item [(d)] $G$ and $H$ have the same orbit distribution vector.
 \item [(e)] $G$ and $H$ have the same entropy.
 \end{itemize}
 \end{theorem}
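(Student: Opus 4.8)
The plan is to establish the four forward implications (a)$\Rightarrow$(b)$\Rightarrow$(c)$\Rightarrow$(d)$\Rightarrow$(e) and then to separate consecutive conditions by exhibiting, at each step, a pair of connected graphs witnessing that the reverse fails. Three of the forward implications are immediate. For (a)$\Rightarrow$(b), an isomorphism $\varphi\colon G\to H$ visibly preserves the order and conjugates ${\rm Aut}(G)$ onto ${\rm Aut}(H)$, hence maps the orbits of $G$ bijectively onto those of $H$ with matching cardinalities; since $\varphi$ preserves adjacency, the equitable counts in (\ref{defSG}) are carried across, giving $S_G=S_H$ for the induced labeling. The implication (b)$\Rightarrow$(c) only forgets the order constraint, and (d)$\Rightarrow$(e) is read off directly from ${\rm Ent}(G)=\Phi(\omega(G))$ in (\ref{entropy}).

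The genuine forward content is (c)$\Rightarrow$(d): I must show that $S_G$ alone pins down $\omega(G)$. The lever is the standard equitable-partition identity obtained by double counting the edges running between two orbits, namely $|\mathcal O_i|\,s_{i,j}=|\mathcal O_j|\,s_{j,i}$ for all $i,j$. In particular $s_{i,j}$ and $s_{j,i}$ vanish together, and where they do not, $|\mathcal O_i|/|\mathcal O_j|=s_{j,i}/s_{i,j}$ can be read off from $S_G$. Since $G$ is connected, its quotient graph on the index set $\{1,\dots,\ell\}$ (an edge joining $i$ and $j$ whenever $s_{i,j}\neq 0$) is connected, so any two indices are linked by a chain of nonzero entries; taking the product of the successive ratios along such a chain expresses each $|\mathcal O_i|$ as a fixed multiple of $|\mathcal O_1|$, with the multipliers depending only on $S_G$. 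Dividing by $\sum_j|\mathcal O_j|$ eliminates the free scalar and recovers the relative sizes. Hence $S_G=S_H$ forces the relative orbit sizes to agree in each coordinate, and a fortiori after the nonincreasing rearrangement of (\ref{odv}), so $\omega(G)=\omega(H)$.

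For the four non-reversals I would supply explicit witnesses, several already at hand. That (b) does not force (a): two non-isomorphic vertex-transitive graphs of equal order and valency, e.g.\ $K_{3,3}$ and the triangular prism, each have order $6$ and the single-orbit divisor matrix $[3]$, yet one is bipartite and the other is not. That (c) does not force (b): the graphs $G$ and $H$ of Example\,\ref{ex:same} share the divisor matrix (\ref{SGSH}) while having orders $5$ and $10$; this is precisely the scale-invariance that self-similar sequences exploit. That (d) does not force (c): the path $P_5$ of Example\,\ref{ex:path} and the graph $G$ of Example\,\ref{ex:same} both realize $\omega=(2/5,2/5,1/5)$, but their divisor matrices (\ref{Spath}) and (\ref{SGSH}) are not permutation-similar, as they have traces $0$ and $1$ and trace is invariant under the transformation of Proposition\,\ref{pr:permutation}. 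Finally, that (e) does not force (d) rests on the non-injectivity of $\Phi$: the vectors $(1/4,1/4,1/4,1/4)$ and $(1/2,1/8,1/8,1/8,1/8)$ both have entropy $2$ yet differ, the first being realized by $P_8$ (four orbits of size $2$) and the second by a suitable connected graph.

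I expect the conceptual crux to be (c)$\Rightarrow$(d): everything there turns on upgrading the purely local identity $|\mathcal O_i|\,s_{i,j}=|\mathcal O_j|\,s_{j,i}$ to a global determination of all relative sizes, which is exactly where connectivity of $G$ (hence of its quotient graph) enters, and on checking that the normalization is insensitive to the labeling ambiguity flagged around Proposition\,\ref{pr:permutation}. The only remaining delicate point is the last separation, where producing an explicit connected graph with orbit distribution $(1/2,1/8,1/8,1/8,1/8)$ requires a short ad hoc construction (note that $\ell\ge 5$ orbits are unavoidable here, since a four-orbit graph of entropy $2$ is forced to be uniform); the other three witnesses are immediate and their stated invariants are checked by inspection.
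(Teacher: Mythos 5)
Your proposal is correct, and its mathematical core --- the implication (c)$\Rightarrow$(d) --- is the paper's own argument in all essentials: the double-counting identity $|\mathcal O_i|\,s_{i,j}=|\mathcal O_j|\,s_{j,i}$, connectivity of the quotient on orbit indices to chain nonzero entries between any two orbits, a telescoping product of ratios determining all relative orbit sizes from $S_G$ alone, and normalization by $\sum_p|\mathcal O_p|$ to eliminate the scale. Where you genuinely diverge is in the four separations, and your versions are more self-contained than the paper's. The paper handles (b)$\not\Rightarrow$(a) and (c)$\not\Rightarrow$(b) by pointing to Figures \ref{Fig:domino} and \ref{Fig:molecular}, asserts (d)$\not\Rightarrow$(c) via ``$P_5$ and the house graph'' with no verification, and outsources (e)$\not\Rightarrow$(d) to Mowshowitz \cite{Mow}; you instead give $K_{3,3}$ versus the triangular prism (both with divisor matrix $[3]$, only one bipartite), the pair of Example \ref{ex:same}, and --- a genuine improvement --- the trace argument showing that (\ref{Spath}) and (\ref{SGSH}) cannot be permutation-similar, which is exactly what Definition \ref{de:os} read together with Proposition \ref{pr:permutation} obliges one to rule out, and which the paper's bare assertion skips. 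The single loose end is your last witness: you assert but do not construct a connected graph with orbit distribution $(1/2,1/8,1/8,1/8,1/8)$. Fill it in, since it is immediate: attach four pendant vertices to one end-vertex of a path $P_4$. In the resulting order-$8$ tree, every automorphism fixes the unique degree-$5$ vertex, hence fixes each vertex of the original path (they are distinguished by degree and distance to that vertex), while the four leaves form one orbit; note a leaf cannot be exchanged with the other end-vertex of the path, as its neighbor has degree $5$ rather than $2$. This gives orbit sizes $4,1,1,1,1$ and entropy $2$, matched against $P_8$ with $\omega(P_8)=(1/4,1/4,1/4,1/4)$; since the two graphs even have different numbers of orbits, (e)$\not\Rightarrow$(d) follows, and your parenthetical that $\ell\geq 5$ is forced (because $\Phi(\xi)\leq\log_2\ell$ with equality only at the uniform vector) is correct and worth keeping.
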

 \begin{proof}
 That (a) implies (b) is clear, but the reverse implication is not true, cf.\,Figure\,\ref{Fig:domino}.  That (b) implies (c) is tautological, but orbitally similar connected graphs are not necessarily  of the same order,  cf.\,Figure\,\ref{Fig:molecular}.
 Next, we prove that   (c) implies  (d). Suppose  that $G$ and $H$ are orbitally similar. We  label the orbits $\mathcal O_1,\ldots,  \mathcal O_\ell$  of $G$ as in (\ref{cardi}) and, afterwards, we label the orbits  $\mathcal Q_1,\ldots,  \mathcal Q_\ell$ of $H$  so as to have
$S_G=S_H$.  The common orbit divisor matrix is denoted by $S$.  We claim that
 \begin{equation}\label{claima}
\left(\frac{\vert \mathcal O_1\vert}{|G|},\ldots, \frac{\vert \mathcal O_\ell\vert}{|G|}\right) \;=\;\left(\frac{\vert \mathcal Q_1\vert}{|H|},\ldots, \frac{\vert \mathcal Q_\ell\vert}{|H|}\right)\,.
\end{equation}
The components of the first vector are arranged in nonincreasing order. If the claim (\ref{claima}) is true, then  the components of the second vector are also arranged in nonincreasing order and we obtain $\omega(G)=\omega(H)$ as  desired.
 Let  $s_{i,j}$ be the $(i,j)$-entry of $S$. Since $S=S_G$, we see that $|\mathcal O_i|s_{i,j}$ is equal to the number of edges joining $\mathcal O_i$ with $\mathcal O_j$,   which in turn  is equal to $|\mathcal O_j|s_{j,i}$.   In short,
    \begin{equation}\label{e1}
 |\mathcal O_i|s_{i,j}=|\mathcal O_j|s_{j,i}.
 \end{equation}
  Since $S=S_H$, the same argument yields
  \begin{equation}\label{e2}
  |\mathcal Q_i|s_{i,j}=|\mathcal Q_j|s_{j,i}.
 \end{equation}
 The next step consists in proving that
 \begin{equation} \label{nextstep}
 \frac{|\mathcal O_p|}{|\mathcal O_q|}=\frac{|\mathcal Q_p|}{|\mathcal Q_q|}
 \end{equation}
 for all $p,q\in\{1,\ldots,\ell\}$.  If $p=q$, then both sides of (\ref{nextstep}) are equal to $1$ and we are done. Suppose that $p\not =q$. If $s_{p,q}$ is nonzero, then
 the choice $(i,j)=(p,q)$ in (\ref{e1})-(\ref{e2}) yields
 $$  \frac{|\mathcal O_p|}{|\mathcal O_q|}\,=\,\frac{ s_{q,p}}{s_{p,q}}\,=\, \frac{|\mathcal Q_p|}{|\mathcal Q_q|}$$
 and we are done again.
 Suppose that $s_{p,q}=0$. In such a case, also $s_{q,p}=0$ and  there is no edge connecting  the orbits $\mathcal O_p$ and $\mathcal O_q$. More precisely, no vertex from one orbit is adjacent to a vertex from the other orbit.
   Given that we are dealing with connected graphs, there are indices  $r_0,r_1,\ldots,r_m$ in  $\{1,\ldots,\ell\}$ such that $r_0=p$, $r_m=q$ and $s_{r_{k-1},r_k}\neq 0$ for all $k\in \{1,\ldots,m\}$.
 By applying \eqref{e1}-\eqref{e2} with $(i,j)=(r_{k-1},r_k)$, we obtain
 \begin{equation}\label{suc}
 \frac{|\mathcal O_{r_{k-1}}|}{|\mathcal O_{r_k}|}\,=\,\frac{s_{r_k,r_{k-1}}}{s_{r_{k-1},r_{k}}}\,=\, \frac{|\mathcal Q_{r_{k-1}}|}{|\mathcal Q_{r_k}|}\,.
 \end{equation}
 Since (\ref{suc}) is true for all $k\in \{1,\ldots,m\}$, the telescoping products
  \begin{eqnarray*}
 && \frac{|\mathcal O_{r_0}|}{|\mathcal O_{r_1}|}
 \frac{|\mathcal O_{r_{1}}|}{|\mathcal O_{r_2}|}\,\ldots  \, \frac{|\mathcal O_{r_{m-1}}|}{|\mathcal O_{r_m}|}
   \;=\; \frac{|\mathcal O_{r_0}|}{|\mathcal O_{r_m}|}\;=\;
   \frac{|\mathcal O_p|}{|\mathcal O_q|}\,,\\[1.6mm]
   &&\frac{|\mathcal Q_{r_0}|}{|\mathcal Q_{r_1}|}
 \frac{|\mathcal Q_{r_{1}}|}{|\mathcal Q_{r_2}|}\,\ldots  \, \frac{|\mathcal Q_{r_{m-1}}|}{|\mathcal Q_{r_m}|}
 \;=\; \frac{|\mathcal Q_{r_0}|}{|\mathcal Q_{r_m}|}
 \;=\;\frac{|\mathcal Q_p|}{|\mathcal Q_q|}
 \end{eqnarray*}
  are equal.
This completes the proof of (\ref{nextstep}). For obtaining (\ref{claima}), it now suffices  to  observe that
$$\frac{|G|}{|\mathcal O_q|}=\frac{\sum_{p=1}^\ell |\mathcal O_p|}{|\mathcal O_q|}=\sum_{p=1}^\ell \frac{|\mathcal O_p|}{|\mathcal O_q|}=\sum_{p=1}^\ell \frac{|\mathcal Q_p|}{|\mathcal Q_q|}=\frac{\sum_{p=1}^\ell |\mathcal Q_p|}{|\mathcal Q_q|}=\frac{|H|}{|\mathcal Q_q|}$$
for all  $q\in \{1,\ldots,\ell\}$.
We have shown in this way that (c) implies  (d). The reverse implication is not true because the  equality  $\omega(G)= \omega(H)$ does not  ensure  orbital similarity between $G$ and $H$. Just think of the path $P_5$ and the house graph.  That (d) implies (e) is obvious from the  definition of graph entropy. However,  two connected graphs with the same entropy may have not only a different  orbit distribution vector, but also   a different  number of orbits, cf.\,Mowshowitz\,\cite[Figure\,3]{Mow}.
 \end{proof}

    \begin{figure}[!ht]
     \centering
    \includegraphics[width=0.5\textwidth]{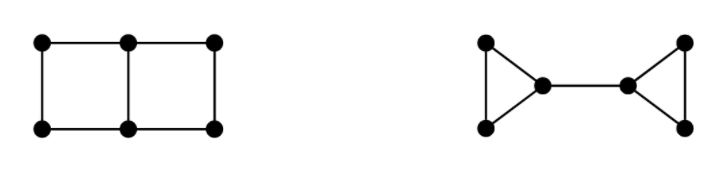}
    \caption{\small   Orbitally similar non-isomorphic graphs of the same order.}\label{Fig:domino}
    \end{figure}

\vskip 0,2cm

 Two connected graphs $G$ and $H$ are called  {\it orbitally homothetic} if  they have the same number of orbits, say $\ell$, and the corresponding orbits $\mathcal O_1,\ldots, \mathcal O_\ell$ and $\mathcal Q_1,\ldots, \mathcal Q_\ell$   can be labeled so that
    \begin{equation} \label{homothetie}
    \frac{\vert \mathcal Q_1 \vert}{\vert \mathcal O_1 \vert}= \ldots = \frac{\vert \mathcal Q_\ell \vert}{\vert \mathcal O_\ell \vert}\,.
    \end{equation}
The relation \eqref{nextstep} in the proof of
 Theorem\,\ref{th:implications} shows that orbital homothety is a consequence of orbital similarity. On the other hand, it is clear that orbital homothety is just the same property as (d).
 Some of the implications stated in Theorem\,\ref{th:implications} can be reversed if the pair $\{G,H\}$ enjoys additional hypotheses. We  do not  indulge on this matter because discussing such details is  space consuming and deviates from the central theme of our work. It is worthwhile mentioning however that the proof of the implication (c) $\Rightarrow$ (d)  provides a procedure for  computing the orbit distribution vector   from the orbit divisor matrix.

 \begin{example}  Consider a connected graph $G$ with orbit divisor matrix
$$S_G=\left[\begin{array}{ccc}
2&1&0\\3&0&1\\0&1&0
\end{array}\right].$$
We wish to determine the orbit distribution vector $\omega (G)=(\omega_1,\omega_2,\omega_3)$. To do this, we compute
\begin{align*}
&\omega_1 \,=\, \frac{|\mathcal O_1|}{|G|}\,=\,\left(\frac{|\mathcal O_1|}{|\mathcal O_1|}+\frac{|\mathcal O_2|}{|\mathcal O_1|}+\frac{|\mathcal O_3|}{|\mathcal O_1|}\right)^{-1}\,=\,\left(1+\frac{s_{1,2}}{s_{2,1}}+\frac{s_{2,3}\,s_{1,2}}{s_{3,2}\,s_{2,1}}\right)^{-1}\,=\,\frac{3}{5}\,,\\
&\omega_2\,=\,\frac{|\mathcal O_2|}{|G|}=\left(\frac{|\mathcal O_1|}{|\mathcal O_2|}+\frac{|\mathcal O_2|}{|\mathcal O_2|}+\frac{|\mathcal O_3|}{|\mathcal O_2|}\right)^{-1}\;\,=\,\left(\frac{s_{2,1}}{s_{1,2}}+1+\frac{s_{2,3}}{s_{3,2}}\right)^{-1}\,=\,\frac{1}{5}\,,\\
&\omega_3\,=\,\frac{|\mathcal O_3|}{|G|}\,=\,\left(\frac{|\mathcal O_1|}{|\mathcal O_3|}+\frac{|\mathcal O_2|}{|\mathcal O_3|}+\frac{|\mathcal O_3|}{|\mathcal O_3|}\right)^{-1}\,=\,\left(\frac{s_{2,1}\,s_{3,2}}{s_{1,2}\,s_{2,3}}+\frac{s_{3,2}}{s_{2,3}}+1\right)^{-1}\,=\,\frac{1}{5}\,.
\end{align*}
\end{example}

\vskip 0,4cm

For subsequent use, we state below some general  rules for constructing pairs of orbitally similar connected graphs. We restrict the  exposition   to some standard  graph operations: Cartesian product, strong product, corona product, vertex-coalescence,  and edge-coalescence.
\subsection{Cartesian, strong, and corona products}
 As usual, $K_n$ is the complete graph of order $n$.
The  Cartesian product operation on graphs is denoted with the symbol $\Box$. The rationale for this notation  is explained in the book of Hammack et al.\,\cite[Figure\,4.4]{HIK}.
The particular Cartesian   product
 $G^\diamond:= G\Box K_2$ is  called the   prism  built   on $G$ or the prism whose base is $G$.  It is possible to use in turn $G^\diamond$ as base for building a new prism, namely, $\diamond^{2}(G)= \left[G^\diamond\right]^\diamond$. This process can be iterated by using  the recursion formula
$\diamond^{r+1}(G)= \left[\diamond^{r}(G)\right]^\diamond $
initialized at  $\diamond^{1}(G)=G^\diamond$. We refer to $\diamond^{r}(G)$ as  the $r$-th prism built on $G$.

\begin{proposition} \label{pr:prism} Let $G$ and $H$ be orbitally similar connected graphs. Then:
\begin{itemize}
\item [(a)]  $G^\diamond $ and $H^\diamond$ are orbitally similar.
\item [(b)] More generally, $\diamond^{r}(G)$ and $\diamond^{r}(H)$ are orbitally similar for all $r\geq 1$.
\end{itemize}
\end{proposition}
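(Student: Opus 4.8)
The plan is to reduce orbital similarity of the prisms to a single matrix identity, namely $S_{G^\diamond} = S_G + I$, and likewise $S_{H^\diamond} = S_H + I$. First I would fix notation: the vertex set of $G^\diamond = G \Box K_2$ is $V_G \times \{0,1\}$, consisting of two disjoint copies of $G$ (the \emph{horizontal} edges, one copy of $G$ in each layer $t \in \{0,1\}$) joined by the rung matching $M = \{\{(u,0),(u,1)\} : u \in V_G\}$. The degree of $(u,t)$ is $d_G(u)+1$, and $G^\diamond$ is connected because $G$ is.

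The heart of the argument is the orbit characterization
\[
{\rm Orb}(G^\diamond) = \{\, \mathcal O_k \times \{0,1\} : \mathcal O_k \in {\rm Orb}(G)\,\}.
\]
One inclusion is easy and explicit: each $\varphi \in {\rm Aut}(G)$ lifts to $\tilde\varphi(w,t) = (\varphi(w),t) \in {\rm Aut}(G^\diamond)$, and the layer swap $\sigma(w,t) = (w,1-t)$ is also an automorphism; composing these shows that any two vertices $(u,i),(v,j)$ with $u,v$ lying in a common orbit $\mathcal O_k$ of $G$ are automorphically similar in $G^\diamond$. Hence every set $\mathcal O_k \times \{0,1\}$ lies inside a single orbit of $G^\diamond$. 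The reverse inclusion — that two vertices lying over \emph{distinct} orbits of $G$ are never merged — is the main obstacle. Intuitively it holds because an automorphism of $G^\diamond$ ought to respect the rung matching $M$ (the $K_2$-layer edges) and therefore descend to an automorphism of the $G$-factor; rigorously I would invoke the structure theorem for automorphisms of Cartesian products of connected graphs in Hammack et al.~\cite{HIK}. That theorem expresses any automorphism of $G \Box K_2$ through isomorphisms of the prime factors together with a permutation of mutually isomorphic factors; the only way the clean picture is disturbed is when $G$ already has $K_2$ as a Cartesian factor, and one checks that interchanging $K_2$-factors still maps each $\mathcal O_k \times \{0,1\}$ onto itself, so no orbits over distinct $\mathcal O_k$ get identified.

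Granting the characterization, the divisor matrix is immediate. Labelling $\mathcal R_k := \mathcal O_k \times \{0,1\}$, a vertex $(u,i)$ with $u \in \mathcal O_k$ has exactly $s_{k,m}$ horizontal neighbours lying over $\mathcal O_m$ and, in addition, its unique rung neighbour $(u,1-i)$, which lies over $\mathcal O_k$. Therefore $(S_{G^\diamond})_{k,m} = s_{k,m} + \delta_{k,m}$, that is,
\[
S_{G^\diamond} = S_G + I.
\]
Since $|\mathcal R_k| = 2\,|\mathcal O_k|$, the ordering of orbits by nonincreasing cardinality is inherited, so the labelling is compatible with the one used for $S_G$. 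Running the same computation for $H$ and using $S_G = S_H$ gives $S_{G^\diamond} = S_G + I = S_H + I = S_{H^\diamond}$, where any residual ambiguity in orbit labelling is absorbed by the permutation-similarity freedom of Proposition~\ref{pr:permutation}. This proves (a). For (b), I would argue by induction on $r$: prisms of connected graphs are connected, so part (a) applied to the orbitally similar pair $\diamond^{r}(G),\diamond^{r}(H)$ yields that $\diamond^{r+1}(G) = [\diamond^{r}(G)]^\diamond$ and $\diamond^{r+1}(H) = [\diamond^{r}(H)]^\diamond$ are orbitally similar; equivalently, iterating the identity gives $S_{\diamond^{r}(G)} = S_G + rI = S_H + rI = S_{\diamond^{r}(H)}$.
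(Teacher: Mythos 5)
Your proof is correct and follows essentially the same route as the paper: identify the orbit partition of $G^\diamond$ as the doubled orbits of $G$, deduce the matrix identity $S_{G^\diamond}=S_G+I_\ell$, and handle (b) by induction via $S_{\diamond^{r}(G)}=S_G+rI_\ell$. The only difference is that you rigorously justify the reverse inclusion of the orbit characterization through the automorphism structure theorem for Cartesian products (including the case where $G$ already has a $K_2$-factor), a point the paper simply asserts without proof.
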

\begin{proof}
{\it Part}\,(a). Let $G$ be a connected graph with vertices $v_1,\ldots, v_n$ and orbits  $\mathcal O_1,  \ldots, \mathcal O_\ell$.
   Let $G^\prime $ be a disjoint copy  of $G$ with vertices denoted by $ v_1^\prime,\ldots,  v_n^\prime$ and orbits  denoted by $\mathcal O_1^\prime,  \ldots, \mathcal O_\ell^\prime$.
    Blue vertices in $G^\prime$  are identified with blue vertices in $G$,   green vertices in $G^\prime$ are identified with   green  vertices in $G$, and so on.  The prism $ G^\diamond$ is obtained by connecting $v$ to $v^\prime$ for each $v\in V_G$.   For obvious reasons,  each  $\{v, v^\prime\}$ is called  a vertical edge of the prism.  Note that
    $G^\diamond$ is a connected graph with  twice as many vertices as $G$,
    but the number of orbits is still $\ell$.  Indeed, the orbit partition  of $G^\diamond$ is
    \begin{equation}\label{union} \{\mathcal O_1\cup \mathcal O_1^\prime,  \ldots, \mathcal O_\ell \cup\mathcal O_\ell^\prime\}.
    \end{equation}
  Since each vertical edge connects a pair of vertices of the same color,   it is clear that
    $ S_{G^\diamond} =S_{G} +I_\ell\,,$
    where $I_\ell$ is the identity matrix of order $\ell$. An analogous formula holds for the prism $H^\diamond$.  The assumption  $S_H= S_G$  yields then
    \begin{eqnarray}\label{chainette}
     S_{H^\diamond}\,=\,S_{H} +I_\ell\,=\, S_{G} +I_\ell\, =\, S_{G^\diamond}.
    \end{eqnarray}
 {\it Part}\,(b). By  using mathematical induction on $r$, we get $S_{\diamond^{r}(G)}= S_G +r I_\ell$, from where we derive the desired conclusion.
\end{proof}
 Incidentally, Proposition\,\ref{pr:prism}\,(a) admits a reverse formulation: if the prisms $G^\diamond $ and $H^\diamond$ are orbitally similar, then the corresponding bases $G$ and $H$ are orbitally similar. This can be seen by rewriting (\ref{chainette}) in the following order: $S_{H}=S_{H^\diamond} -I_\ell= S_{G^\diamond} -I_\ell = S_{G}$.  Figure\,\ref{Fig:prism} displays a pair  of orbitally similar prisms of the same order. We are using a planar representation in each case, so these graphs do not look as real life prisms. The distortion introduced by   planarity is because the vertical edges are not placed in a vertical position.

   \begin{figure}[!ht]
     \centering
    \includegraphics[width=0.52\textwidth]{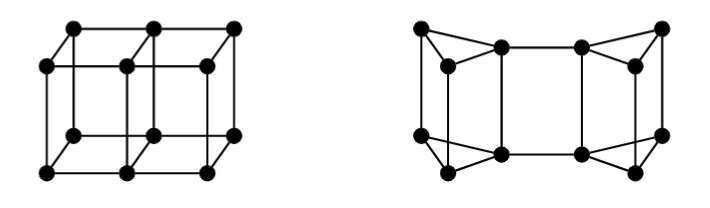}
    \caption{\small Orbitally similar  prisms built on the orbitally similar graphs of Figure\,\ref{Fig:domino}.  }\label{Fig:prism}
    \end{figure}

As said before, prisms are special Cartesian products. Strong prisms are obtained by using the  strong product operation $\boxtimes$ on graphs.  See the book \cite[p.\,36]{HIK} for the definition of the strong product $G\boxtimes F$ of two graphs. The particular strong product   $G^\boxtimes:= G\boxtimes K_2$  is  called the strong prism  built  on $G$ or the strong prism whose base is $G$.
Successive strong prisms  are obtained by using  the recursion formula
$\boxtimes^{r+1}(G)= \left[\boxtimes^{r}(G)\right]^\boxtimes $
initialized at  $\boxtimes^{1}(G)=G^\boxtimes$.

    \begin{proposition} \label{pr:strong} Let $G$ and $H$ be orbitally similar connected graphs. Then:
\begin{itemize}
\item [(a)]  $G^\boxtimes $ and $H^\boxtimes$ are orbitally similar.
\item [(b)] More generally, $\boxtimes^{r}(G)$ and $\boxtimes^{r}(H)$ are orbitally similar for all $r\geq 1$.
\end{itemize}
\end{proposition}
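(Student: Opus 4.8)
The plan is to follow the proof of Proposition~\ref{pr:prism} line by line, the only novelty being the extra edges that the strong product introduces. Let $G$ have vertices $v_1,\ldots,v_n$ and orbits $\mathcal O_1,\ldots,\mathcal O_\ell$, let $G'$ be a disjoint colored copy with vertices $v_i'$ and orbits $\mathcal O_i'$ (blue with blue, green with green, and so on), and view $G^\boxtimes=G\boxtimes K_2$ as having vertex set $V_G\cup V_{G'}$. By the definition of the strong product, $G^\boxtimes$ has three kinds of edges: the edges inside each of the two copies of $G$; the \emph{vertical} edges $\{v,v'\}$; and the \emph{diagonal} edges $\{v,w'\}$ for every edge $\{v,w\}$ of $G$. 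In contrast with the Cartesian prism, it is precisely the diagonal edges that make the difference; like the vertical ones, they respect colors, in the sense that a vertex of color $\alpha_i$ is joined diagonally exactly to the copies of its $G$-neighbors.

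Granting that the orbit partition of $G^\boxtimes$ is again~\eqref{union}, I would read off the divisor matrix by counting, for a fixed vertex of color $\alpha_i$, how many of its neighbors carry color $\alpha_j$: there are $s_{i,j}$ such neighbors inside its own copy, a further $s_{i,j}$ reached diagonally in the other copy, and a single vertical neighbor exactly when $j=i$. This gives
\begin{equation*}
 S_{G^\boxtimes}=2\,S_G+I_\ell ,
\end{equation*}
the strong-prism analogue of the identity $S_{G^\diamond}=S_G+I_\ell$ used before. The right-hand side depends on $G$ only through $S_G$, so the hypothesis $S_G=S_H$ at once yields $S_{G^\boxtimes}=S_{H^\boxtimes}$, which is part~(a). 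For part~(b) I would apply the same formula to $\boxtimes^{r}(G)$, obtaining the recursion $S_{\boxtimes^{r+1}(G)}=2\,S_{\boxtimes^{r}(G)}+I_\ell$, whose solution $S_{\boxtimes^{r}(G)}=2^{r}S_G+(2^{r}-1)I_\ell$ is once more a function of $S_G$ and $\ell$ alone; hence the equality $S_G=S_H$ propagates to every $r\geq 1$.

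That each block $\mathcal O_i\cup\mathcal O_i'$ lies inside a single orbit is easy: the lifted automorphisms $\varphi\in{\rm Aut}(G)$ acting identically on both copies, together with the swap $v\leftrightarrow v'$ of the two copies, are automorphisms of $G^\boxtimes$ and act transitively on $\mathcal O_i\cup\mathcal O_i'$. The step I expect to be the real obstacle is the opposite inclusion, namely that two distinct blocks never merge, for the displayed divisor computation silently assumes that $G^\boxtimes$ has exactly $\ell$ orbits. Here the strong prism genuinely differs from the Cartesian one: it may acquire extra symmetry, as the example $K_2\boxtimes K_2=K_4$ shows, so transitivity alone does not pin down the partition.

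To close this gap I would exploit that $G^\boxtimes$ coincides with the lexicographic product $G[K_2]$: each base vertex $v$ is replaced by the pair of \emph{true twins} $\{v,v'\}$, and one checks directly that $N[v]=N[v']$ in $G^\boxtimes$. Every automorphism preserves the true-twin relation, and a short computation shows that two vertices of $G^\boxtimes$ are true twins exactly when their base vertices either coincide or are true twins in $G$. Hence an automorphism permutes the fibers $\{v,v'\}$ in a way that descends to $G$, and the only symmetries beyond the lifted ones stem from true twins already present in $G$—which, being automorphically similar, share a single orbit. This should confine every automorphism of $G^\boxtimes$ to the blocks $\mathcal O_i\cup\mathcal O_i'$, so that the orbit partition is exactly~\eqref{union} and the presence of precisely $\ell$ orbits is vindicated.
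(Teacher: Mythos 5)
Your proof is correct and takes essentially the same route as the paper: the same count of vertical and crossed edges yielding $S_{G^\boxtimes}=2S_G+I_\ell$, and the same induction giving $S_{\boxtimes^{r}(G)}=2^{r}S_G+\left(2^{r}-1\right)I_\ell$, so that $S_G=S_H$ propagates to all $r\geq 1$. The one difference is that where the paper simply asserts that the orbit partition of $G^\boxtimes$ is again \eqref{union}, you supply a genuine justification (closed neighborhoods multiply in strong products, so true twins of $G^\boxtimes$ arise only from fibers and from true twins already present in $G$, whence automorphisms descend, up to twin-class permutations, to automorphisms of $G$) --- a worthwhile addition, since the possible extra symmetry of strong prisms, as in $K_2\boxtimes K_2=K_4$, makes that step less innocent than in Proposition~\ref{pr:prism}.
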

\begin{proof}  {\it Part}\,(a). We use the same notation as in the proof  of Proposition\,\ref{pr:prism}\,(a). For constructing   $G^\boxtimes$, we start by considering   the base $G$ and  the  disjoint copy $G^\prime$. Then, for each  $v\in V_G$, we add not only the vertical edge   $\{v, v^\prime\}$, but also  a group of $d_G(v)$ crossed edges, namely, $\{v, u^\prime\}$ with  $u\in N_G(v)$. The crossed edges are added  in the strong prism, but not in the usual prism. Hence,  $G^\boxtimes$  admits  $G^\diamond$ as a proper subgraph. The orbit partition of $G^\boxtimes$ is however the same as that of $G^\diamond$, namely, (\ref{union}). It is not difficult  to check that
\begin{equation} \label{formo} S_{G^\boxtimes} = 2S_{G} +I_\ell.
\end{equation}
The identity matrix  in (\ref{formo}) is due to  the  vertical edges, whereas the factor $2$ in front of $S_{G}$ is due  to the crossed edges.
Since $S_G=S_H$, we get
$S_{G^\boxtimes}=2S_{G} +I_\ell= 2S_{H} +I_\ell= S_{H^\boxtimes}$.
{\it Part}\,(b).  Mathematical induction on $r$ yields $S_{\boxtimes^{r}(G)}= 2^rS_G + \left(2^r-1\right) I_\ell.$ This formula leads to the desired conclusion.
\end{proof}

 \begin{figure}[!ht]
     \centering
    \includegraphics[width=0.5\textwidth]{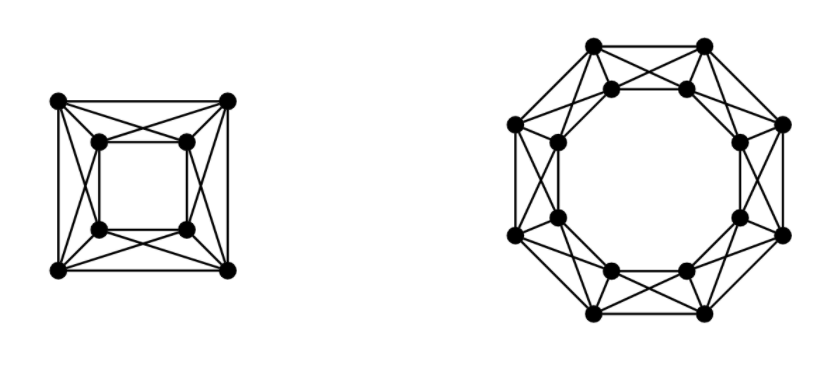}
    \caption{\small   Strong prisms that are orbitally similar. }\label{Fig:strong}
    \end{figure}

The strong prisms shown in Figure\,\ref{Fig:strong} are orbitally similar  because the corresponding bases $C_4$ and $C_8$ are orbitally similar. The symbol $C_n$ denotes the cycle of order $n$.   Instead of a Cartesian product or a strong product, the  next proposition uses a non-commutative  operation
$(G,F)\mapsto G\odot F$
introduced in Frucht and Harary\,\cite{FH} and defined  as follows.
 The corona $G\odot F$  of two graphs $G$  and $F$ is the graph
obtained by taking $\vert G\vert$ copies of $F$, and then joining the $i$-th vertex of $G$ to every vertex in the $i$-th copy of $F$. Of special interest is the particular case  $G^\odot:= G\odot K_1$.
 Since  $K_1$ is the smallest possible choice as ingredient $F$,  the graph $G^\odot$ is called the minimal corona    built on $G$. For instance, $C_n^\odot$ corresponds to the sun graph on $2n$ vertices.  Of course, it  possible to build  in turn a minimal corona  on $G^\odot$.  This process can be iterated by using  the recursion formula
$\odot^{r+1}(G)= \left[\odot^{r}(G)\right]^\odot $
initialized at  $\odot^{1}(G)=G^\odot$. We refer to $\odot^{r}(G)$ as the $r$-th minimal  corona built on $G$.

\begin{proposition} \label{pr:corona} Let $G$ and $H$ be orbitally similar connected graphs. Then:
\begin{itemize}
\item [(a)] $G^\odot $ and $H^\odot$ are orbitally similar.
\item [(b)] More generally, $\odot^{r}(G)$ and $\odot^{r}(H)$ are orbitally similar for all $r\geq 1$.
\end{itemize}
\end{proposition}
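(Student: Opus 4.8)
The plan is to mirror the strategy used for Propositions \ref{pr:prism} and \ref{pr:strong}: identify the orbit partition of the minimal corona $G^\odot$ in terms of that of $G$, read off a block formula for the orbit divisor matrix $S_{G^\odot}$ that depends only on $S_G$, and then invoke the hypothesis $S_G=S_H$. Throughout, write $\mathcal O_1,\ldots,\mathcal O_\ell$ for the orbits of $G$ and, for each core vertex $v\in V_G$, let $w_v$ denote the pendant vertex of $G^\odot$ attached to $v$. The essential difference from the prism cases is that the pendants form genuinely new orbits, so the number of orbits doubles rather than staying equal to $\ell$.

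First I would determine the orbits of $G^\odot$. The key structural claim is that $G^\odot$ has exactly $2\ell$ orbits: the core orbits $\mathcal O_1,\ldots,\mathcal O_\ell$ together with the pendant orbits $\mathcal W_i:=\{w_v:v\in\mathcal O_i\}$ for $i=1,\ldots,\ell$. Three points must be checked. (i) Any $\varphi\in{\rm Aut}(G)$ extends to an automorphism $\tilde\varphi$ of $G^\odot$ via $\tilde\varphi(w_v)=w_{\varphi(v)}$, so automorphically similar vertices of $G$ stay similar in $G^\odot$, both as core and as pendant vertices. (ii) Excluding the trivial case $G=K_1$, connectedness gives every core vertex degree $d_G(v)+1\geq 2$ in $G^\odot$, while every pendant vertex has degree $1$; since automorphisms preserve degree, no core vertex is similar to a pendant vertex, and every $\psi\in{\rm Aut}(G^\odot)$ preserves the core/pendant split. (iii) Given such a $\psi$, its restriction to $V_G$ is an automorphism of $G$, and $\psi(w_v)=w_u$ forces $\psi(v)=u$, since $v$ (resp.\ $u$) is the unique neighbor of $w_v$ (resp.\ $w_u$); hence $w_u$ and $w_v$ are similar in $G^\odot$ only when $u$ and $v$ are similar in $G$. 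Together these establish the claimed orbit partition.

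Next I would compute $S_{G^\odot}$ in block form, ordering the orbits as $(\mathcal O_1,\ldots,\mathcal O_\ell,\mathcal W_1,\ldots,\mathcal W_\ell)$. Adjacencies among core vertices are precisely the edges of $G$, contributing the block $S_G$; each core vertex $v\in\mathcal O_i$ has its single pendant neighbor $w_v$ in $\mathcal W_i$ and each pendant $w_v\in\mathcal W_i$ has its single core neighbor $v$ in $\mathcal O_i$, contributing two $I_\ell$ blocks; and pendants are mutually non-adjacent. This yields
$$S_{G^\odot}=\left[\begin{array}{cc} S_G & I_\ell \\ I_\ell & 0 \end{array}\right],$$
a matrix depending on $G$ only through $S_G$. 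The same formula holds for $H$, so $S_G=S_H$ immediately gives $S_{G^\odot}=S_{H^\odot}$, proving (a). Part (b) then follows by induction on $r$: assuming $\odot^{r}(G)$ and $\odot^{r}(H)$ are orbitally similar, apply (a) to this pair to conclude that $\odot^{r+1}(G)=[\odot^{r}(G)]^\odot$ and $\odot^{r+1}(H)=[\odot^{r}(H)]^\odot$ are orbitally similar.

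I expect the main obstacle to be the orbit identification in the first step, in particular part (iii): ruling out an automorphism of $G^\odot$ that merges distinct pendant orbits without being induced from ${\rm Aut}(G)$. The block computation and the induction are then routine, exactly as in the prism and strong-prism cases. A minor point worth flagging is the labeling convention of Definition \ref{de:os}: since $|\mathcal O_i|=|\mathcal W_i|$ for each $i$, the nonincreasing-cardinality ordering may interleave the two families, but by Proposition \ref{pr:permutation} the block form above still certifies orbital similarity up to the admissible simultaneous relabeling.
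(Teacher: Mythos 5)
Your proof is correct and takes essentially the same route as the paper's: identify the orbit partition of $G^\odot$ as the core orbits $\mathcal O_1,\ldots,\mathcal O_\ell$ together with their pendant copies, read off the block formula $S_{G^\odot}=\left[\begin{smallmatrix} S_G & I_\ell \\ I_\ell & O_\ell \end{smallmatrix}\right]$, invoke $S_G=S_H$, and obtain (b) by induction on $r$. If anything, your step (iii) (ruling out automorphisms of $G^\odot$ that merge orbits, via restriction to $V_G$) and your exclusion of the trivial case $G=K_1$ spell out details the paper's proof leaves tacit.
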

\begin{proof} It suffices to prove the first part, because the second one follows afterward by mathematical induction on $r$.
Let $G$ be a connected graph with vertex set $V= \{v_1,\ldots, v_n\}$ and orbit partition   $\{\mathcal O_1,  \ldots, \mathcal O_\ell\}$.   The vertex set of $G^\odot$ is
$ V\cup V^\prime$,  where  $V^\prime= \{v_1^\prime, \ldots, v_n^\prime\}$ and $v^\prime _i$ is the vertex of the $i$-th copy of $K_1$ that is adjacent to $v_i$ in the construction of $G^\odot$.
 Let $k\in\{1,\ldots,\ell\}$.  We claim that  $\mathcal O_k$ is an orbit of $G^\odot$. For any pair $u,v$ in $\mathcal O_k$, there exists  an automorphism $\varphi$ of $G$ such that $\varphi(u)=v$.  For  $i\in \{1,\ldots,n\}$, let $\tilde{\varphi}(v_i)=\varphi(v_i)$ and  $\tilde \varphi(v^\prime_i)$ be the vertex in $V^\prime$ that is adjacent to $\varphi(v_i)$.
 Then   \color{black} $\tilde \varphi$  is an automorphism of $G^\odot$ such that  $\tilde{\varphi}(u)=v$. On the other hand,
$V^\prime\cap \mathcal O_k=\emptyset$ because   each vertex in $V^\prime$ has degree $1$ and each  vertex in $\mathcal O_k$ has degree at least two. This completes the proof of the claim. A similar argument shows that  also $\mathcal O^\prime_k:=\{v^\prime:v\in\mathcal O_k\}$
 is an orbit of $G^\odot$. In fact, $\mathcal O_1,\ldots,\mathcal O_\ell,\mathcal O^\prime_1,\ldots,\mathcal O^\prime_\ell$ is the orbit partition of $G^\odot$. Once the orbits of $G^\odot$ have been identified,  it is easy  to build  the corresponding orbit divisor matrix. We get
$$S_{G^\odot}= \left[
                \begin{array}{cc}
                  S_G & I_\ell \\
                  I_\ell & O_\ell \\
                \end{array}
              \right],$$
where $O_\ell$ is the zero matrix of order $\ell$. An analogous formula holds for $S_{H^\odot}$. Note that $S_{G^\odot}=S_{H^\odot}$ because $S_G=S_H$.
\end{proof}

    \subsection{Vertex-coalescence and edge-coalescence}
 The next proposition uses a non-commutative binary operation
$ (G,L)\mapsto G\nabla L$   called vertex-loading (or loading operation, in short).
The loading  of a rooted graph $L$ onto a graph $G$ consists in coalescing  (or gluing) a copy   of $L$ at each vertex of $G$. The coalescence  is   of course  relative to the root vertex of $L$.
By economy of language, we say that $G\nabla L$ is the graph ``$G$ loaded with $L$''. It is  reasonable to call   $G$ and $L$  the support and the load, respectively. For ease of  presentation, we consider as support $G$ a connected graph  with a very simple orbital structure, say a vertex-transitive connected graph.  As load $L$, we consider a connected graph with  possibly many orbits, but with an orbital structure that is  not too involved. The prototype example of load that we have in mind is 
\begin{equation} \label{Lqm}
 L_{q,m}:= \left\lbrace
\begin{array}{l}
q \mbox{ copies of a path }P_{m+1} \mbox{ of  length } m\\
\mbox{coalescing at a common end-vertex}.
\end{array}
\right.
 \end{equation} 
 The particular case $L_{1,m}$ corresponds to  a path of length  $m$ rooted at one of its end-vertices. Although $L_{2,m}$  is a path of length $2m$, we prefer to view this graph as a coalescence of two paths  of length $m$. If $q\geq 3$, then  $L_{q,m}$ is an equilibrated starlike tree: it is  obtained by coalescing,  at a common vertex,  $q$ copies  of $P_{1+m}$, cf.\,Figure\,\ref{Fig:loading}.   The common vertex is declared the root of $L_{q,m}$. Note that $L_{q,m}$ has $1+qm$ vertices.  In particular,  $L_{q,1}=K_{1,q}$ is a star of order $1+q$.

\begin{proposition} \label{pr:load} Let $G$ and $H$ be vertex-transitive  connected graphs of the same degree.  Then:
\begin{itemize}
\item [(a)] $G$ and $H$ are orbitally similar.
\item [(b)] $G\nabla L_{q,m}$ and $H\nabla L_{q,m}$ are orbitally similar for all  $m,q\geq 1$.
\end{itemize}
\end{proposition}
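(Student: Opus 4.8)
Part (a) I would dispose of in one line: vertex-transitivity forces a single orbit, so $S_G$ and $S_H$ are both $1\times 1$ matrices whose unique entry is the number of neighbours of any vertex, that is, the common degree $d$ (recall a vertex-transitive graph is regular, since vertices in one orbit share their degree). Thus $S_G=[\,d\,]=S_H$, and $G,H$ are orbitally similar by Definition~\ref{de:os}.

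The substance is part (b), and my plan is to compute the orbit divisor matrix of $G\nabla L_{q,m}$ explicitly and to observe that it is governed only by the numbers $d$, $q$, $m$. First I would describe the vertex set of $G\nabla L_{q,m}$: it splits into the set $R$ of the $|G|$ root vertices (the original vertices of $G$, each carrying an attached copy of $L_{q,m}$) together with, for each level $j\in\{1,\dots,m\}$, the set $B_j$ of all $j$-th vertices along the pendant paths, one from each of the $q$ paths in each copy; thus $|R|=|G|$ and $|B_j|=q|G|$. The claim is that the orbit partition is exactly $\{R,B_1,\dots,B_m\}$, which I would prove in two halves.

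First, each of these sets lies within a single orbit. Given two roots, vertex-transitivity of $G$ supplies $\varphi\in\mathrm{Aut}(G)$ mapping one to the other; extending $\varphi$ so that it carries the pendant copy of $L_{q,m}$ at each vertex $u$ onto the copy at $\varphi(u)$ (preserving the path structure) yields an automorphism of $G\nabla L_{q,m}$, showing $R$ lies in one orbit and, applied level by level, sending any $B_j$-vertex into the copy containing any prescribed target. To finish within $B_j$ I would compose with a path-permuting automorphism that fixes everything outside one copy and permutes its $q$ pendant paths (these are automorphisms of the rooted graph $L_{q,m}$ and extend trivially). Second, the sets are pairwise distinct orbits, which I would settle with a single automorphism-invariant, the distance to the nearest leaf: the leaves of $G\nabla L_{q,m}$ are precisely its degree-$1$ vertices, and these are exactly the level-$m$ vertices (roots have degree $d+q\ge 2$, interior path vertices have degree $2$). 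A level-$j$ vertex reaches its nearest leaf along its own path in $m-j$ steps and a root in $m$ steps, and since the values $m,m-1,\dots,0$ are pairwise distinct, none of $R,B_1,\dots,B_m$ can merge.

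With the orbits identified, computing $S_{G\nabla L_{q,m}}$ in the ordering $(R,B_1,\dots,B_m)$ is routine: a root is adjacent to $d$ roots and to $q$ level-$1$ vertices, and each level-$j$ vertex is adjacent only to its neighbouring levels on its own path. Indexing the orbits by $0,1,\dots,m$, the nonzero entries are $s_{0,0}=d$, $s_{0,1}=q$, $s_{j,j-1}=1$ for $1\le j\le m$, and $s_{j,j+1}=1$ for $1\le j\le m-1$, all others being zero. This $(m+1)\times(m+1)$ matrix depends only on $d$, $q$, $m$; since $H$ is vertex-transitive of the same degree $d$, the identical computation gives $S_{H\nabla L_{q,m}}=S_{G\nabla L_{q,m}}$, and orbital similarity follows. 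The only genuine obstacle here is the orbit identification, and within it the verification that the partition neither splits (handled by extending automorphisms of $G$ and permuting pendant paths) nor merges (handled by the distance-to-nearest-leaf invariant); both part (a) and the final matrix computation are routine.
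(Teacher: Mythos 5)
Your proof is correct and takes essentially the same route as the paper: part (a) is identical, and for part (b) the paper likewise identifies the $m+1$ orbits as the support vertices together with the $m$ level sets of the pendant paths and writes down the orbit divisor matrix $\left[\begin{smallmatrix} A_{P_m} & e_m \\ q\,e_m^\top & d \end{smallmatrix}\right]$ (your ordering is just a permutation of this), observing that it depends only on $d$, $q$, $m$ and not on $\vert G\vert$. The only difference is that you verify the orbit identification in detail (extending automorphisms of $G$, permuting pendant paths, and separating levels via the distance-to-nearest-leaf invariant), where the paper simply asserts it as clear --- added rigor rather than a different approach.
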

\begin{proof}  {\it Part} (a).  Since $G$ is a vertex-transitive connected graph, its orbit divisor  matrix $S_G$ is  a scalar, namely,  the degree of $G$. Since  $H$ is a vertex-transitive connected graph with the same degree as $G$, we have $S_G=S_H$. {\it Part} (b). Let $n$ and $d$ be the order and degree of $G$, respectively.  Let $S$ be the orbit divisor  matrix
 of  $G\nabla L_{q,m}$. We shall derive an explicit formula for $S$  and see
that this matrix depends on $q$, $m$, and $d$,  but not on $n$.  Clearly,  $G\nabla L_{q,m}$ has $m+1$ orbits.  The first orbit $\mathcal O_1$ is formed with   the $nq$ vertices at distance $m$ from the support $G$, the second orbit $\mathcal O_2$ is formed  with the $nq$   vertices at distance $m-1$ from $G$, and so on. The last  orbit $\mathcal O_{m+1}$ is the only one  of different size. It is formed with the $n$ vertices   on the support $G$. If the orbits of $G\nabla L_{q,m}$ are labeled as just mentioned, then
\begin{equation} \label{Sloaded}
S=  \left[
   \begin{array}{cc}
     A_{P_m} & e_{m} \\
     qe_{m}^\top & d \\
   \end{array}
 \right]\,,
\end{equation}
where $A_{P_m}$ is the adjacency matrix of the path $P_m$ and $e_m$ is the last canonical vector of
$\mathbb{R}^{m}$. The matrix $S$ depends on $m$, $q$ and $d$, but  it  is independent of $n$. In other words, the order of $G$ is irrelevant in the construction of $S$. Since $H$ is also of degree $d$,
the  orbit divisor  matrix
of  $H\nabla L_{q,m}$ is also expressible as in  (\ref{Sloaded}).
\end{proof}

 \begin{figure}[!ht]
     \centering
   \includegraphics[width=0.62\textwidth]{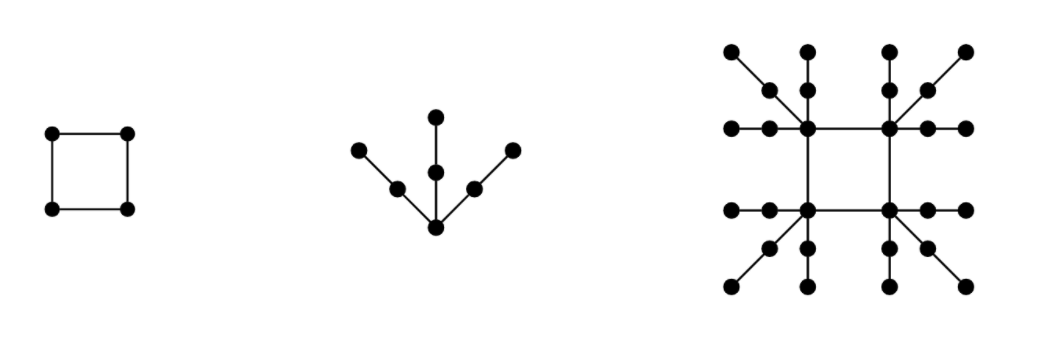}\put(-79,16){\scriptsize$\nabla$}\put(-42,16){\scriptsize$=$}
    \caption{\small   Vertex-loading of  the equilibrated starlike tree $L(3,2)$ onto the cycle $C_4$. }\label{Fig:loading}
    \end{figure}

The operation of vertex-loading has an edge-loading  counterpart
$ (G,L)\mapsto G\nabla^e L$.  The superscript ``$e$'' in the symbol $\nabla^e$ underlines the fact that a copy of $L$ is
loaded on  each edge of the support $G$.  This means that $L$ is considered as an edge-rooted graph and that its distinguished  edge is glued at each edge of $G$. As prototype example of edge-rooted load we consider
\begin{equation} \label{Bqm}
 B_{q,m}:= \left\lbrace
\begin{array}{l}
q \,\mbox{ copies of a cycle }C_{m} \\
\mbox{sharing a common edge}.
\end{array}
\right.
 \end{equation}
  Such a connected graph is known as the $m$-gonal book with $q$ pages. The common edge, which  is called the spine of the book,  serves as distinguished edge. By definition,   $G\nabla^e B_{q,m}$ is the graph obtained by loading each edge of  $G$  with a copy of the book $B_{q,m}$. Metaphorically speaking, each edge of the  support (or  ``shelve'') is loaded  with a copy of the same book, cf.\,Figure\,\ref{Fig:book}.
 The edge counterpart of Proposition\,\ref{pr:load}\,(b) reads as follows.
 \begin{proposition} \label{pr:edgeload} Let $G$ and $H$ be vertex-transitive  connected graphs of the same degree.   In addition, suppose that $G$ and $H$ are edge-transitive.
 Then
 $G\nabla^e B_{q,m}$ and $H\nabla^e B_{q,m}$ are orbitally similar for all  $m\geq 3$ and $q\geq 1$.
\end{proposition}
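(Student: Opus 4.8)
The plan is to follow the blueprint of the proof of Proposition~\ref{pr:load}. By Definition~\ref{de:os}, orbital similarity of the (connected) graphs $G\nabla^e B_{q,m}$ and $H\nabla^e B_{q,m}$ amounts to the equality of their orbit divisor matrices, so it suffices to produce an explicit description of the orbit divisor matrix $S$ of $G\nabla^e B_{q,m}$ and to check that $S$ depends only on the book parameters $m,q$ and on the common degree $d$ of the support, and not on the order $n=|G|$ nor on any finer feature of $G$. Once this is established, the very same description applies verbatim to $H\nabla^e B_{q,m}$, since $H$ is also vertex-transitive and edge-transitive of degree $d$, and the conclusion $S_{G\nabla^e B_{q,m}}=S_{H\nabla^e B_{q,m}}$ is immediate.

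First I would identify the orbits of $G\nabla^e B_{q,m}$. The vertex set splits into the $n$ support vertices (those of $G$) and, for each edge of $G$, the $q(m-2)$ internal page vertices of the book glued on that edge; every internal vertex has degree $2$, whereas every support vertex has degree $d(q+1)$, namely $d$ spine-neighbours inside $G$ together with $q$ first-level page-neighbours in each of its $d$ incident books. Vertex-transitivity of $G$ lets each $\varphi\in {\rm Aut}(G)$ be lifted to an automorphism of $G\nabla^e B_{q,m}$ that carries the book on $e$ onto the book on $\varphi(e)$, so the support vertices form a single orbit; edge-transitivity makes all books equivalent, so the internal vertices are sorted into orbits purely according to their level, i.e.\ their distance along a page to the spine. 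Grouping the internal vertices by level and recording, for each orbit, the number of its neighbours in each other orbit yields a matrix $S$ whose internal block reproduces the adjacency pattern of the $C_m$-pages (a path-type pattern, folded at its midpoint when levels $i$ and $m-1-i$ are identified) and whose coupling to the support orbit carries the entries $d$ (spine), $qd$ and $1$ (first-level page vertices). This $S$ involves only $m$, $q$ and $d$, exactly as required, in complete analogy with formula~(\ref{Sloaded}).

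The step I expect to be the main obstacle is the precise bookkeeping of the internal orbits, and in particular the interaction between the natural reflection symmetry of the book $B_{q,m}$ — which swaps the two spine endpoints and sends a level-$i$ vertex to a level-$(m-1-i)$ vertex — and the symmetries actually available in $G$. A level-$i$ and a level-$(m-1-i)$ internal vertex lie in a common orbit of $G\nabla^e B_{q,m}$ precisely when some automorphism of $G$ reverses the underlying edge, so the number of internal orbits, and hence the size and shape of $S$, hinges on whether $G$ admits edge-reversing automorphisms. This is exactly where edge-transitivity must be invoked. When the common degree $d$ is odd, a classical theorem guarantees that a vertex- and edge-transitive graph is automatically arc-transitive, so every edge can be reversed and the levels $i$ and $m-1-i$ fuse in the same canonical way for both $G$ and $H$; when $d$ is even this implication can fail (half-transitive graphs exist), and one must either assume in addition that $G$ and $H$ are arc-transitive — which covers all the intended supports, such as cycles and complete graphs — or check separately that the two supports induce the same, possibly unfused, internal orbit pattern. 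Pinning down that this fusion pattern is identical for $G$ and $H$, so that $S$ genuinely does not see the support beyond its degree, is the delicate point; the remaining computation of the entries of $S$ is then routine.
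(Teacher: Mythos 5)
Your strategy is exactly the paper's: exhibit the orbit divisor matrix $S$ of $G\nabla^e B_{q,m}$ explicitly and observe that it involves only $m$, $q$ and the common degree $d$, so that the same matrix serves verbatim for $H\nabla^e B_{q,m}$. Your bookkeeping agrees with the matrices displayed in the paper's proof (support row with entries $d$ and $dq$, path-type internal block, support vertices of degree $d(q+1)$ versus internal vertices of degree $2$). Where you part company with the paper is that you refuse to take the fusion of levels $i$ and $m-1-i$ for granted, and you are right to refuse: the paper simply asserts that $G\nabla^e B_{q,m}$ has $\lceil m/2\rceil$ orbits, which tacitly presupposes that every edge of $G$ can be reversed by an automorphism. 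Since any automorphism of the loaded graph preserves the support setwise (by the degree count $d(q+1)\geq 4>2$) and restricts on it to an automorphism of $G$, a level-$i$ vertex and a level-$(m-1-i)$ vertex lie in a common orbit precisely when some automorphism of $G$ carries an arc to a reversed arc --- exactly the arc-transitivity issue you isolate.

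So the ``delicate point'' you flag is not a loose end in your write-up but a genuine gap in the statement and in the paper's proof. As you note, Tutte's theorem settles the case of odd $d$; for even $d$, take $G$ to be the Holt graph (half-transitive: vertex- and edge-transitive but not arc-transitive, degree $4$, order $27$) and $H=K_5$. Both satisfy all hypotheses of Proposition\,\ref{pr:edgeload} with $d=4$, yet for $m=4$, $q=1$ the graph $G\nabla^e B_{1,4}$ has three orbits --- no automorphism of the Holt graph reverses an arc, so the two internal levels of each $C_4$-page do not fuse --- while $H\nabla^e B_{1,4}$ has two; the loaded graphs are therefore not orbitally similar, and the proposition fails as stated. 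It becomes correct under your proposed repair (assume $G$ and $H$ arc-transitive, which is automatic for odd $d$ and covers all supports used elsewhere in the paper), and also, as you anticipate, when both supports are half-transitive: then both loaded graphs show the same unfused pattern with $m-1$ orbits, the support row reading $d$, $qd/2$, $qd/2$ against the two extreme internal orbits, since a half-transitive vertex has $d/2$ ``outgoing'' and $d/2$ ``incoming'' edges. In short: same approach as the paper, executed more carefully, and your analysis locates the precise step at which the paper's own proof is incomplete.
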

\begin{proof}
Let $n$ and $d$ be the order and degree of $G$, respectively. If  $d=0$, then   $G$ has no edge and the results holds vacuously. If  $d=1$, then  $G$ and $H$ are isomorphic to $K_2$ and we are done.  We suppose then that $d\geq 2$. Let $S$ be the orbit divisor  matrix of $G\nabla^e B_{q,m}$. Since $G$ is both vertex-transitive and edge-transitive and  $B_{q,m}$ has a relatively simple orbital structure, it is possible to derive an explicit formula for $S$.  The number of orbits of  $G\nabla^e B_{q,m}$ is equal to the upper integer part of $m/2$.
Up to permutation similarity transformation, the explicit form of $S$ is
\begin{eqnarray*}
\left[
    \begin{array}{cc}
        0&  2\\
       dq &  d\\
     \end{array}
   \right],\; \left[
     \begin{array}{cc}
       1 &  1\\
       dq & d \\
     \end{array}
   \right], \; \left[
     \begin{array}{ccc}
     0& 2 &0\\
       1 &  0& 1\\
      0& dq & d \\
     \end{array}
   \right], \;\left[
     \begin{array}{ccc}
     1& 1 &0\\
       1 &  0& 1\\
      0& dq & d \\
     \end{array}
   \right]
\end{eqnarray*}
for  $m=3, 4, 5$, and $6$, respectively.
The general pattern is clear.   Let  $r:= \lceil m/2\rceil -1$ and $\{e_1,e_2,\ldots, e_r\}$ be the canonical basis of $\mathbb{R}^r$. Depending on whether $m$ is odd or even, we get
$$
  \left[
   \begin{array}{cc}
     A_{P_r}+e_1e_1^\top & e_{r} \\
     dqe_{r}^\top & d \\
   \end{array}
 \right],  \;\left[
   \begin{array}{cc}
     A_{P_r}+e_1e_2^\top & e_{r} \\
     dqe_{r}^\top & d \\
   \end{array}
 \right]\,,
$$
respectively.
 In either case, the matrix  $S$   is independent of $n$. The orbit divisor  matrix of
 $H\nabla^e B_{q,m}$ is then  the same as that of  $G\nabla^e B_{q,m}$.
\end{proof}
\vskip 0,2cm
\begin{remark} Edge-transitivity is an essential assumption in Proposition\,\ref{pr:edgeload}. The circular ladders ${\rm CL}_3= C_3^\diamond$ and ${\rm CL}_4= C_4^\diamond$ are orbitally similar vertex-transitive connected graphs. However, the edge-loaded graphs ${\rm CL}_3\nabla^e B_{1,3}$ and ${\rm CL}_4\nabla^e B_{1,3}$ are not orbitally similar, because the first one has  $3$ orbits and the second one has only $2$ orbits. Observe that ${\rm CL}_4$ is edge-transitive, but ${\rm CL}_3$  is not edge-transitive.
\end{remark}

 \begin{figure}[!ht]
     \centering
   \includegraphics[width=0.62\textwidth]{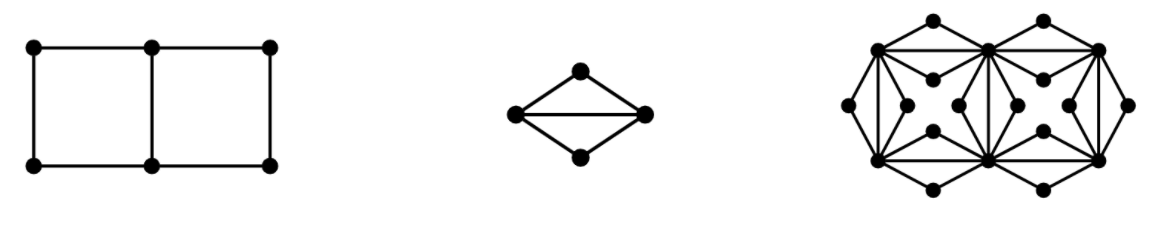}\put(-69,10){\scriptsize$\nabla^e$}\put(-38,10){\scriptsize$=$}
    \caption{\small   Edge-loading of   $B_{2,3}$ onto the domino graph $P_3\Box P_2$. }\label{Fig:book}
    \end{figure}

\section{Construction of  self-similar sequences} \label{se:construct}

 This section develops some  techniques for constructing self-similar sequences. A self-similar sequence already constructed can be used in turn for obtaining new ones.  For ease  of presentation, we do not pay attention to the seed from where the self-similar sequence emanates.    Parts (a) and (b) in Proposition\,\ref{pr:c} are obvious.    Part (c) follow from Proposition\,\ref{pr:prism}, Proposition\,\ref{pr:strong} and  Proposition\,\ref{pr:corona}.

\begin{proposition} \label{pr:c} Let $\{G_k: k\geq 1\}$ be a self-similar sequence.
Then:
\begin{itemize}
\item [(a)] Each subsequence of  $\{G_k: k\geq 1\}$ is  self-similar.
\item [(b)] If each $G_k$ is   exchanged   by an orbital similar connected graph $H_k$ such that $\vert H_k\vert =\vert G_k\vert$, then the new sequence $\{H_k: k\geq 1\}$ is self-similar.
\item [(c)] $\{G_k^\ast: k\geq 1\}$ is  a  self-similar sequence, where  $\,\ast\,$ is either the prism operation $\diamond$,  the strong prism operation $\boxtimes$ , or the minimal corona operation $\odot$.
\end{itemize}
\end{proposition}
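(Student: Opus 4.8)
The plan is to verify, in each of the three cases, that the two defining conditions of a self-similar sequence survive the operation in question: the strict growth of orders, condition (\ref{c1}), and the pairwise orbital similarity, condition (\ref{c2}). Since we ignore the seed here, condition (\ref{c3}) plays no role. Throughout, I would exploit the fact that orbital similarity is an equivalence relation: by Definition\,\ref{de:os} together with Proposition\,\ref{pr:permutation}, declaring $G$ and $H$ orbitally similar amounts to declaring $S_G$ and $S_H$ equal up to permutation similarity transformation, and permutation similarity of matrices is reflexive, symmetric, and transitive.

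For part (a), let $\{G_{k_j}:j\geq 1\}$ be a subsequence indexed by $k_1<k_2<\cdots$. Its orders satisfy $|G_{k_1}|<|G_{k_2}|<\cdots$ because the order function is strictly increasing along the original index, so (\ref{c1}) holds. Moreover, any two terms $G_{k_i}$ and $G_{k_j}$ already belong to $\{G_k:k\geq 1\}$, hence they are orbitally similar by (\ref{c2}); thus the subsequence also satisfies (\ref{c2}).

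For part (b), the equality $|H_k|=|G_k|$ transports the strict growth of orders directly from $\{G_k\}$ to $\{H_k\}$, and each $H_k$ is connected by hypothesis. For the pairwise orbital similarity I would chain three instances of the equivalence relation: $H_i$ is orbitally similar to $G_i$ by hypothesis, $G_i$ is orbitally similar to $G_j$ because $\{G_k\}$ is self-similar, and $G_j$ is orbitally similar to $H_j$ again by hypothesis. Transitivity then gives that $H_i$ and $H_j$ are orbitally similar for all $i,j$.

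For part (c), I would first observe that each of the three operations doubles the number of vertices, $|G_k^\ast|=2|G_k|$, so the strict growth $|G_1^\ast|<|G_2^\ast|<\cdots$ is immediate; moreover each $G_k^\ast$ is connected because the prism, the strong prism, and the minimal corona of a connected graph are connected. The pairwise orbital similarity is precisely what Proposition\,\ref{pr:prism}\,(a), Proposition\,\ref{pr:strong}\,(a), and Proposition\,\ref{pr:corona}\,(a) provide: applying the pertinent one to the orbitally similar pair $\{G_i,G_j\}$ shows that $G_i^\ast$ and $G_j^\ast$ are orbitally similar. There is essentially no serious obstacle in the whole proof; the only step that calls for a moment's care is the transitivity argument in part (b), which is why I would make the reduction to permutation similarity of orbit divisor matrices explicit before chaining the three similarities.
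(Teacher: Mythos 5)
Your proof is correct and takes essentially the same route as the paper, which simply declares parts (a) and (b) obvious and derives part (c) from Proposition\,\ref{pr:prism}, Proposition\,\ref{pr:strong}, and Proposition\,\ref{pr:corona} --- exactly the three results you invoke. Your added details (transitivity of orbital similarity via permutation similarity of the orbit divisor matrices, and the order-doubling and connectedness checks in part (c)) are a sound fleshing-out of what the paper leaves implicit.
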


\subsection{Using vertex-transitivity}
The first example of self-similar sequence that comes to mind  is
\begin{equation}\label{GC}
 \mathbb {G}^{\rm C}:=\{C_{n}: n\geq 3\} \,.
 \end{equation}
 Passing from $C_n$ to  $C_{n+1}$ increases the number of vertices, but  does not modify the  orbit divisor matrix. The explanation is as follows. As mentioned before,
if $G$ is a vertex-transitive connected graph, then $S_G$ is  a scalar, namely, the degree of $G$.
 Since any cycle is a vertex-transitive connected graph of degree $2$, we have
$S_{C_n}=2 $ for all $ n\geq 3$. Hence, (\ref{GC}) satisfies the orbit similarity condition (\ref{c2}).  Other examples of
self-similar sequences in the same vein are
\begin{eqnarray}\label{CL}
 \mathbb {G}^{\rm CL}&:=&\{{\rm CL}_n: n\geq 3\},\\ \label{ML}
 \mathbb {G}^{\rm ML}&:=&\{{\rm ML}_n: n\geq 3\}, \\ \label{CP}
 \mathbb {G}^{\rm CP}&:=&\{{\rm CP}_n: n\geq 3\},\\ \label{AP}
 \mathbb {G}^{\rm AP}&:=&\{{\rm AP}_n: n\geq 3\}.
\end{eqnarray}
 The graph ${\rm CL}_n:=C_n^\diamond $  is the circular ladder of order $2n$. Since ${\rm CL}_n$ is a prism with circular base, the self-similarity of (\ref{CL})  is a consequence of Proposition\,\ref{pr:prism} and the fact that (\ref{GC}) is self-similar.   ${\rm ML}_n $ denotes the  Moebius ladder of order $2n$  and ${\rm CP}_n $ is the  crossed prism  of order $2n$.  Crossed prisms are not to be confused with strong prisms.   Each Moebius ladder and each crossed prism is a vertex-transitive connected graph of degree $3$. This observation and Proposition\,\ref{pr:vt} (stated in a moment) explain why  (\ref{ML}) is self-similar and why (\ref{CP}) is self similar.
 ${\rm AP}_n$  stands for  the anti-prism graph of order  $2n$. Each  anti-prism is a vertex-transitive connected graph of degree  $4$. The self-similarity of (\ref{AP}) is also taken care by  Proposition\,\ref{pr:vt}.

 \begin{proposition} \label{pr:vt}
Let  $\mathbb{G}=\{G_k: k\geq 1\}$ be  a sequence of connected graphs on an  increasing number of vertices.
\begin{itemize}
\item [(a)] If  the $G_k$'s are vertex-transitive of the same  degree, then $\mathbb{G}$ is self-similar.
\item [(b)] Conversely, if $\mathbb{G}$ is  self-similar and
 at least one of the $G_k$'s is vertex-transitive,  then all the $G_k$'s are vertex-transitive of the same  degree.
\end{itemize}
\end{proposition}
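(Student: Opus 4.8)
The plan is to reduce everything to the elementary observation, recalled in the discussion following \eqref{GC}, that a connected graph is vertex-transitive exactly when it has a single orbit, and that in this case its orbit divisor matrix is the $1\times 1$ matrix whose unique entry is the common vertex degree. Once this is in hand, both parts are a matter of comparing the sizes and entries of orbit divisor matrices, with no genuine computation required.

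For part (a), I would argue as follows. If every $G_k$ is vertex-transitive of the same degree $d$, then each $S_{G_k}$ equals the scalar $[d]$, so in particular $S_{G_i}=S_{G_j}$ for all $i$ and $j$. By Definition \ref{de:os} the graphs are pairwise orbitally similar, which is condition \eqref{c2}, while condition \eqref{c1} is the standing hypothesis that the orders are strictly increasing. Hence $\mathbb{G}$ is self-similar in the sense of Definition \ref{de:self} (the seed being irrelevant here).

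For part (b), the crucial point is that the equality $S_G=S_H$ defining orbital similarity can hold only when $S_G$ and $S_H$ have the same order, and the order of $S_G$ is precisely the number of orbits of $G$. So I would fix a vertex-transitive term $G_{k_0}$; its orbit divisor matrix is $1\times 1$, and self-similarity forces every $S_{G_k}$ to be $1\times 1$ as well. This means each $G_k$ has exactly one orbit, hence is vertex-transitive, and the common scalar value of the $S_{G_k}$ is the common degree of $G_{k_0}$, so all the $G_k$ share that degree.

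I do not expect a serious obstacle: the content is entirely carried by the scalar description of $S_G$ for vertex-transitive $G$. The one subtlety worth flagging explicitly is that orbital similarity, via Definition \ref{de:os}, silently requires the two orbit divisor matrices to be of the same order (permutation similarity, under which the equality is read, preserves matrix size), and it is exactly this matching of matrix sizes that lets vertex-transitivity of a single term propagate to the whole sequence.
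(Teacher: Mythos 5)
Your proof is correct and follows essentially the same route as the paper: part (a) via the scalar identity $S_{G_k}=[d]$, and part (b) by propagating the $1\times 1$ size of $S_{G_{k_0}}$ through the equalities $S_{G_k}=S_{G_{k_0}}$ to conclude each $G_k$ has one orbit. Your explicit remark that orbital similarity forces the orbit divisor matrices to have the same order (hence the same number of orbits) is exactly the step the paper leaves implicit in its terse ``each $G_k$ has exactly one orbit.''
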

\begin{proof} Part (a) is clear: if $d$ is the common degree of the $G_k$'s, then  $S_{G_k}=d$
for all $k\geq 1$ and  condition (\ref{c2}) is in force.  For proving (b), we suppose that there exists an integer $k_0\geq 1$ such that $G_{k_0}$ is  vertex-transitive.  Let $d_0$ be  the degree of  $G_{k_0}$. Hence,  each $G_k$ has exactly one orbit and
$S_{G_k}= S_{G_{k_0}}= d_0$.
In conclusion, each $G_k$ is   vertex-transitive of degree $d_0$.
\end{proof}

A word of caution is in order: vertex-transitivity alone is not enough to ensure self-similarity.  For instance, each complete graph $K_n$ is vertex transitive, but  the sequence $ \{K_{n}: n\geq 1\}$  is not  self-similar. This is  because $S_{K_n}=n-1 $ depends on $n$.   The next result  is easy. 
\begin{proposition} \label{pr:vtbis} For  each integer \color{black} $d\geq 2$, there exists a self-similar sequence
 of vertex-transitive graphs of degree $d$.
\end{proposition}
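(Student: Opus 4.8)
The plan is to reduce the statement to Proposition~\ref{pr:vt}(a). That proposition guarantees that any sequence of vertex-transitive connected graphs of one fixed degree $d$, taken on a strictly increasing number of vertices, is automatically self-similar. Consequently it suffices to exhibit, for each $d\geq 2$, an infinite family of connected $d$-regular vertex-transitive graphs whose orders grow without bound; the self-similarity then comes for free.

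The natural candidates are the circulant graphs, that is, the Cayley graphs of the cyclic groups $\mathbb{Z}_n$. For a symmetric connection set $S=-S\subseteq \mathbb{Z}_n\setminus\{0\}$, the graph $\mathrm{Cay}(\mathbb{Z}_n,S)$ is vertex-transitive, has degree $|S|$, and is connected exactly when $S$ generates $\mathbb{Z}_n$. To realize degree precisely $d$ I would split into two cases. If $d=2k$ is even, take $S=\{\pm 1,\pm 2,\ldots,\pm k\}$ for every $n>d$; the $2k$ listed residues are then pairwise distinct and nonzero, so $|S|=2k=d$. If $d=2k+1$ is odd, restrict to even $n>d$ and take $S=\{\pm 1,\ldots,\pm k\}\cup\{n/2\}$, the extra element $n/2$ being its own additive inverse and hence contributing a single neighbour, so that $|S|=2k+1=d$. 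In both cases $1\in S$, so $S$ generates $\mathbb{Z}_n$ and the graph is connected, and letting $n$ run over all admissible values yields graphs of strictly increasing order.

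The only point demanding attention is the degree bookkeeping: one must take $n>d$ so that the residues $\pm 1,\ldots,\pm k$ are all distinct and, in the odd case, distinct from the self-paired element $n/2$, which is precisely why $n/2$ adds only one to the degree rather than two. Once this threshold on $n$ is respected, every member of the family is a connected $d$-regular vertex-transitive graph with orders tending to infinity, and Proposition~\ref{pr:vt}(a) delivers the claimed self-similar sequence. Since each verification is elementary, there is no genuine obstacle here; the case $d=2$ simply recovers the cycles of $\mathbb{G}^{\mathrm{C}}$ in \eqref{GC}. As an aside, an alternative route staying inside the paper's own machinery is to start from $\mathbb{G}^{\mathrm{C}}$ and apply the prism operation $d-2$ times: by Proposition~\ref{pr:c}(c) the result remains self-similar, and each term $\diamond^{\,d-2}(C_n)$, being an iterated Cartesian product of vertex-transitive graphs, is itself vertex-transitive of degree $d$.
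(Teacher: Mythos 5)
Your proof is correct, but your primary route differs from the paper's. The paper handles $d=2$ with the cycles $\{C_n : n\geq 3\}$ and, for $d\geq 3$, invokes Proposition~\ref{pr:prism}\,(b) to conclude that $\{\diamond^{d-2}(C_n): n\geq 3\}$ is self-similar, noting that each iterated prism $\diamond^{d-2}(C_n)$ is vertex-transitive of degree $d$ --- exactly the construction you relegate to your closing aside. Your main argument instead builds circulant graphs $\mathrm{Cay}(\mathbb{Z}_n,S)$ and feeds them into Proposition~\ref{pr:vt}\,(a); the degree bookkeeping is handled correctly (for $d=2k$ the set $S=\{\pm 1,\ldots,\pm k\}$ has exactly $2k$ distinct nonzero residues once $n>d$, and for $d=2k+1$ the self-inverse element $n/2$ contributes a single neighbour, with $n/2\notin\{\pm 1,\ldots,\pm k\}$ again guaranteed by $n>d$), connectivity follows from $1\in S$, and vertex-transitivity is automatic for Cayley graphs. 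The comparison is instructive: your circulant route produces, for every admissible order $n$, a $d$-regular example directly --- so for odd $d$ you get graphs of every sufficiently large even order, whereas the paper's prism construction only realizes orders of the form $2^{d-2}n$ --- at the mild cost of the even/odd case split and of importing the (standard, but external to the paper) facts about Cayley graphs. The paper's route stays entirely within its own machinery (Propositions~\ref{pr:prism} and~\ref{pr:c}\,(c)) and needs no case analysis beyond $d=2$ versus $d\geq 3$. Both arguments are complete; there is no gap in yours.
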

\begin{proof} If $d=2$, then we  take the sequence $\{C_n:n\geq 3\}$ of cycles. Consider then the case   $d\geq 3$. Thanks to
Proposition\,\ref{pr:prism}\,(b), the sequence
$\{\diamond^{d-2}(C_n):n\geq 3\}$
is  self-similar. Note that $\diamond^{d-2}(C_n)$ is a vertex-transitive graph of degree $d$, as requested.
\end{proof}
In a sense, a torus grid graph is a generalization of a cycle. It is clear that,
for all  $n\geq 3$ and $m\geq 3$, the torus grid  $T(n,m):=C_n\Box C_m$  is a vertex-transitive connected graph of degree $4$. Hence,
$S_{T(n,m)}= 4$ is a scalar independent of $n$ and $m$. If we fix one parameter of $T(n,m)$, then  we obtain a self-similarity sequence indexed by the other parameter.
\begin{proposition}\label{pr:torus}
For each integer $m\geq 3$, the sequence $\{{\rm T}(n,m): n\geq 3\}$
is self-similar.
\end{proposition}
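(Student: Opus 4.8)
The plan is to recognize this statement as a direct corollary of Proposition~\ref{pr:vt}(a). Fixing $m\geq 3$, I would reindex the sequence as $G_k:= T(k+2,m)$ for $k\geq 1$, so that it takes the form $\{G_k:k\geq 1\}$ required by Definition~\ref{de:self}. The first thing to verify is the growth condition~(\ref{c1}): since $\vert T(n,m)\vert=\vert C_n\Box C_m\vert=nm$ and $m\geq 3$ is held fixed, the order $nm$ is strictly increasing in $n$, so indeed $\vert G_1\vert<\vert G_2\vert<\cdots$.

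The second, and only substantive, point is to confirm that every $T(n,m)$ is a vertex-transitive connected graph of common degree. As already observed just before the statement, $C_n\Box C_m$ is connected, being a Cartesian product of connected graphs, and it is $4$-regular, since in a Cartesian product the degree of a vertex equals the sum of the degrees of its two coordinates and each cycle is $2$-regular. Vertex-transitivity follows from the fact that a Cartesian product of vertex-transitive graphs is again vertex-transitive: given vertices $(a,b)$ and $(a',b')$ of $C_n\Box C_m$, one takes an automorphism of $C_n$ sending $a$ to $a'$ together with one of $C_m$ sending $b$ to $b'$, and the coordinatewise map they induce is an automorphism of the product carrying $(a,b)$ to $(a',b')$. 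Hence each term of the sequence is vertex-transitive of degree $4$, so that $S_{T(n,m)}=4$ for every $n\geq 3$, in accordance with the earlier remark that the orbit divisor matrix of a vertex-transitive connected graph is the scalar equal to its degree.

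With these two facts in hand, Proposition~\ref{pr:vt}(a) applies verbatim: a sequence of connected graphs on an increasing number of vertices, all vertex-transitive of one and the same degree, is self-similar. This yields the claim. I do not expect any genuine obstacle here; the only care needed is the routine verification of vertex-transitivity of the torus grid together with the bookkeeping of reindexing, so that the hypotheses of Proposition~\ref{pr:vt}(a) are matched exactly.
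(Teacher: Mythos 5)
Your proof is correct and follows exactly the paper's own route: the paper justifies this proposition by the remark immediately preceding it, namely that $T(n,m)=C_n\Box C_m$ is a vertex-transitive connected graph of degree $4$, so $S_{T(n,m)}=4$ is independent of $n$, and then Proposition~\ref{pr:vt}(a) applies. Your version merely spells out the routine details (connectivity, $4$-regularity, and vertex-transitivity of the Cartesian product, plus the reindexing) that the paper dismisses as clear.
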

This easy result can be generalized in many ways. For instance, we may perturb simultaneously $n$ and $m$, but making sure that the product $nm$ increases.
\begin{proposition}\label{pr:interm}
Let  $n_k, m_k\geq 3$ be integers such that $n_km_k$ increases with  $k\geq 1$. Then  the sequence $\{{\rm T}(n_k,m_k): k\geq 1\}$
of torus grids is self-similar.
\end{proposition}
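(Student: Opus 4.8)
The plan is to verify directly the two defining conditions of a self-similar sequence from Definition~\ref{de:self}, namely the strict growth in order~\eqref{c1} and the pairwise orbital similarity~\eqref{c2}; as throughout this section, the seed condition~\eqref{c3} is disregarded. The whole argument then reduces to an application of Proposition~\ref{pr:vt}\,(a), so the only genuine task is to check that its two hypotheses are satisfied by the sequence $\{T(n_k,m_k):k\geq 1\}$.

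First I would record that the order of each term is $\vert T(n_k,m_k)\vert = n_k m_k$, since $T(n_k,m_k)=C_{n_k}\Box C_{m_k}$ has exactly $n_k m_k$ vertices. The standing hypothesis that $n_k m_k$ increases with $k$ is therefore precisely condition~\eqref{c1}; in particular the terms are pairwise non-isomorphic graphs of strictly increasing order, which also rules out trivial repetitions in the sequence.

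Second, I would invoke the fact recorded just before Proposition~\ref{pr:torus} that, for all $n,m\geq 3$, the torus grid $C_n\Box C_m$ is a vertex-transitive connected graph of degree $4$: each cycle factor is vertex-transitive of degree $2$, and a Cartesian product of vertex-transitive graphs is again vertex-transitive, with degree equal to the sum of the factor degrees. Hence every $G_k=T(n_k,m_k)$ is vertex-transitive of the common degree $4$, so its orbit divisor matrix is the scalar $S_{G_k}=4$. All the $S_{G_k}$ thus coincide and condition~\eqref{c2} holds. Equivalently, the sequence meets both hypotheses of Proposition~\ref{pr:vt}\,(a)---strictly increasing order together with common vertex-transitive degree---which yields the self-similarity at once. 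The only point that warrants any attention is the vertex-transitivity of $C_{n_k}\Box C_{m_k}$, but this is a standard feature of Cartesian products of vertex-transitive graphs and is insensitive to the particular values $n_k,m_k\geq 3$, so no real obstacle arises; the proposition is essentially a restatement of Proposition~\ref{pr:torus} with the single fixed parameter replaced by a jointly growing product.
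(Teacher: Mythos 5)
Your proof is correct and follows essentially the same route as the paper, which skips a separate proof and absorbs this statement into Theorem\,\ref{th:majo}: there, exactly as in your argument, one notes that the torus is a connected vertex-transitive graph of degree $2r$ (here $4$), that its order is the product of the cycle lengths, and then applies Proposition\,\ref{pr:vt}\,(a). Nothing is missing; your explicit check of vertex-transitivity of the Cartesian product is the same key observation the authors rely on.
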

We skip the  proof of Proposition\,\ref{pr:interm} because it is integrated in the proof of Theorem\,\ref{th:majo}.  Another extension of Proposition\,\ref{pr:torus} consists in  working with an $r$-dimensional  torus graph
\begin{equation} \label{multitorus}
 T(s_1, s_2, \ldots, s_r):= C_{s_1}\Box C_{s_2}\Box \ldots \Box  C_{s_r}.
 \end{equation}
 For convenience, a graph like  (\ref{multitorus}) is called a  {\it multi-parametric torus }  and it is denoted by $T(\vec{s}\,)$. By abuse of language, we say that $r$ is the dimension of (\ref{multitorus}).
If we fix for instance the  parameters $s_2,\ldots, s_r\geq 3$, then we  get  a self-similar sequence indexed by $s_1\geq 3$. There are still  other possibilities of  generalization. We may think for instance of loading each vertex of  (\ref{multitorus}) with a prescribed load, say  $L_{q,m}$. We get in this way the following   self-similarity theorem for loaded multi-parametric torii.  For the sake of completeness, Theorem\,\ref{th:majo}  considers also the choice  $r=1$,  in which  case the multi-parametric torus degenerates  into a cycle.

\begin{theorem} \label{th:majo}
Let $q,m$, and $ r$ be  fixed positive integers. Let $\vec{s}(k)=(s_1^k, s_2^k, \ldots, s_r^k)$ be a   multi-index, with  $s_1^k, s_2^k, \ldots, s_r^k\geq 3$, such that the product $s_1^k s_2^k \ldots s_r^k$ increases  as
$k$ increases.  Then
$$ \{T(\vec{s}(k))\nabla L_{q,m}: k\geq 1\}$$
is a self-similar sequence.
\end{theorem}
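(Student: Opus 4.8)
The plan is to reduce the statement entirely to Proposition~\ref{pr:load}(b), by verifying that each support $T(\vec{s}(k))$ is a connected vertex-transitive graph whose degree does not depend on $k$. Once this is in hand, the two substantive requirements of Definition~\ref{de:self}, namely the strict growth condition (\ref{c1}) and the pairwise orbital similarity condition (\ref{c2}), follow with little effort; condition (\ref{c3}) is dropped here since we are not tracking the seed.

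First I would analyze the support. Each factor $C_{s_i^k}$ is a connected vertex-transitive graph of degree $2$. I would then invoke three standard facts about the Cartesian product: it preserves connectedness, it preserves vertex-transitivity (the direct product of the factor automorphism groups already acts transitively, acting coordinatewise on $V_{C_{s_1^k}}\times\cdots\times V_{C_{s_r^k}}$), and the degree of a vertex in $G\,\Box\, F$ equals $d_G(\cdot)+d_F(\cdot)$. Consequently $T(\vec{s}(k))=C_{s_1^k}\Box\cdots\Box C_{s_r^k}$ is a connected vertex-transitive graph of degree $2r$. The essential point is that, although the orders $s_1^k\cdots s_r^k$ vary with $k$, the degree $2r$ is the \emph{same} for every $k$ because $r$ is fixed.

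Next I would check the growth condition. Coalescing a copy of $L_{q,m}$ at a vertex of the support identifies the root of $L_{q,m}$ with that vertex and hence appends $\vert L_{q,m}\vert-1=qm$ fresh vertices; doing this at each of the $\vert T(\vec{s}(k))\vert$ vertices gives $\vert T(\vec{s}(k))\nabla L_{q,m}\vert=(1+qm)\,s_1^k\cdots s_r^k$. Since $1+qm$ is a fixed positive constant and the products $s_1^k\cdots s_r^k$ increase with $k$ by hypothesis, the orders of the loaded graphs strictly increase, which is (\ref{c1}). For the orbital similarity (\ref{c2}) I would apply Proposition~\ref{pr:load}(b) to any pair $G=T(\vec{s}(j))$ and $H=T(\vec{s}(k))$: both are vertex-transitive connected graphs of the same degree $2r$, so their loads $T(\vec{s}(j))\nabla L_{q,m}$ and $T(\vec{s}(k))\nabla L_{q,m}$ are orbitally similar. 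Equivalently, formula (\ref{Sloaded}) furnishes a common orbit divisor matrix, namely the matrix obtained by setting $d=2r$ there, which depends only on $q$, $m$, and $r$ and hence not on $k$. The graphs of the sequence are therefore pairwise orbitally similar, and combining this with (\ref{c1}) shows the sequence is self-similar.

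The only genuinely non-routine ingredient is the vertex-transitivity of the multi-parametric torus together with the identification of its degree as the $k$-independent value $2r$; everything else is a direct citation of Proposition~\ref{pr:load}(b) and a vertex count. I expect no obstacle beyond stating these product facts cleanly. I would remark that this is precisely the argument that absorbs the proof of Proposition~\ref{pr:interm}: in the unloaded case with $r=2$, the same vertex-transitivity computation (degree $4$, independent of $k$) yields self-similarity directly through Proposition~\ref{pr:vt}(a), so Theorem~\ref{th:majo} is the common generalization announced after Proposition~\ref{pr:torus}.
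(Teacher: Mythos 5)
Your proof is correct and follows essentially the same route as the paper's: both rest on the key observation that each torus $T(\vec{s}(k))$ is a connected vertex-transitive graph of degree $2r$ independent of $k$, and then conclude via Proposition~\ref{pr:load}(b) (the paper first packages the unloaded sequence through Proposition~\ref{pr:vt}(a), which is immaterial). Your explicit count $\vert T(\vec{s}(k))\nabla L_{q,m}\vert=(1+qm)\,s_1^k\cdots s_r^k$ even makes the growth condition (\ref{c1}) for the loaded graphs precise, a detail the paper leaves implicit.
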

\begin{proof}   Our first observation is that each $r$-dimensional  torus
$G_k:=T(\vec{s}(k))$
is a connected graph. The second observation is that $$\vert G_k\vert =s_1^k s_2^k \ldots s_r^k $$
increases with $k$. The third observation is the key point of the proof: the  $G_k$'s are  vertex transitive graphs of the same degree. Indeed, all of them have degree  $2r$. Proposition\,\ref{pr:vt}\,(a) ensures that $\mathbb{G}:=\{G_k:k\geq 1\}$ is a self-similar sequence. The final step consists in loading each $G_k$ with the same load, namely, $L_{q,m}$. Proposition\,\ref{pr:load}\,(b) and the self-similarity of $\mathbb{G}$ complete  the job.
\end{proof}

\subsection{Using bi-orbitality}
 We now pass to   graphs with  $2$ orbits. \color{black}   A bi-orbital connected graph
 is less symmetric than a vertex-transitive connected graph, but it  has still a good deal  of symmetry.  The vertex set of a bi-orbital connected graph $G$ is partitioned into blue vertices and  red vertices.
The orbit divisor matrix
$$ S_G= \left[
          \begin{array}{cc}
            s_{1,1} &  s_{1,2}\\
            s_{2,1} & s_{2,2} \\
          \end{array}
        \right] = \left[
          \begin{array}{cc}
            s_{\rm blue, \rm blue} &  s_{\rm blue, \rm red}\\
             s_{\rm red, \rm blue}&  s_{\rm red, \rm red}\\
          \end{array}
        \right]$$
 is of order two and, therefore,  it is easy to handle.  Since $G$ is  connected, we  have
 $s_{1,1}\geq 0$, $s_{2,2} \geq 0$, $s_{1,2}\geq 1$, and $ s_{2,1} \geq 1$.
One  quickly realizes that, for each integer $q\geq 1$,   there are infinitely many graphs with
 $$\left[
          \begin{array}{cc}
            0&  1\\
            q & 2 \\
          \end{array}
        \right] $$
         as orbit divisor  matrix; think for instance of a sun graph (case $q=1$) or a generalized sun graph (case $q\geq 2$).
       \begin{figure}[!ht]
     \centering
    \includegraphics[width=0.7\textwidth]{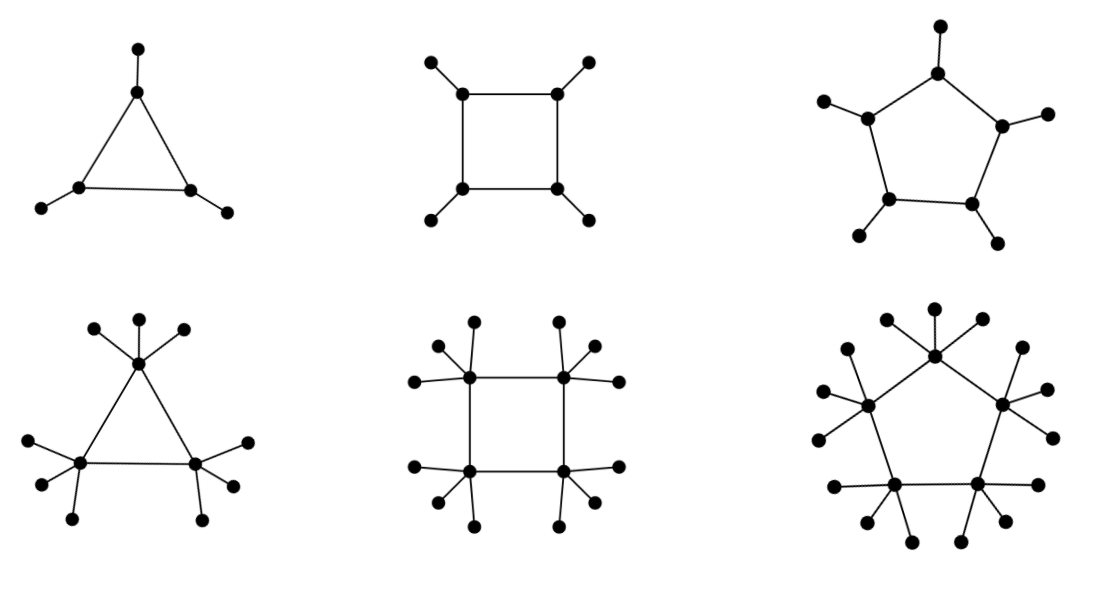}
    \caption{\small   Self-similar sequence of sun graphs (first row) and generalized sun graphs (second row).  }\label{Fig:sun}
    \end{figure}
         In the second row of Figure\,\ref{Fig:sun}, we display a self-similar sequence of generalized sun graphs. The parameter $q$ represents the number of rays emanating from each vertex of a cycle (we are taking $q=3$ for ease of  visualization). Such  elementary examples lead to  more elaborate constructions in the same vein.  Instead of attaching  vertices to a cycle as in a generalized sun graph, we may consider coalescing one or several  copies of a complete graph at each vertex of a cycle.   This idea is formalized in the next proposition.    With the particular choice $p=2$ in  Proposition\,\ref{pr:Gnpq}\,(a),  we recover  the self-similar sequence of generalized sun graphs.  The notation
$qK_p$ in part (b) refers to $q$ disjoint copies of $K_p$.
\begin{proposition} \label{pr:Gnpq}
Let $p$ and $q$ be positive integers.
\begin{itemize}
\item [(a)] For each $n\geq 3$,  let $G_n(p,q)$ be the graph obtained by coalescing $q$  copies of a complete graph $K_p$ at each vertex of a cycle $C_{n}$.
Then $\{G_n(p,q):n\geq 3\}$ is a self-similar sequence.
\item [(b)]  More generally, consider a sequence $\{F_k:k\geq 1\}$ of vertex-transitive connected graphs of prescribed  degree, say $d\geq 2$. If $\vert F_k\vert$ grows with $k$, then
    $ \{F_k\odot (qK_p): k\geq 1\}$
    is a self-similar sequence.
\end{itemize}
\end{proposition}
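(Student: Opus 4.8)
The plan is to prove part (b) in full generality and then recover part (a) as a special case. The first thing I would do is recognize that the coalescence construction in (a) is a minimal corona in disguise: gluing $q$ copies of $K_p$ at a vertex $v$ is the same as taking $q$ disjoint copies of $K_{p-1}$ and joining $v$ to every one of their vertices, since $v$ together with each such copy then closes up into a $K_p$. Hence $G_n(p,q)=C_n\odot(qK_{p-1})$ for $p\geq 2$, while $G_n(1,q)=C_n$ trivially. As $\{C_n:n\geq 3\}$ is a sequence of vertex-transitive connected graphs of degree $d=2$ with strictly increasing order, part (a) is exactly part (b) applied with $F_k=C_n$ and load $qK_{p-1}$.

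For part (b) I would check the two defining requirements of a self-similar sequence in Definition~\ref{de:self}: strictly increasing order and pairwise orbital similarity. The order is immediate, since each of the $|F_k|$ vertices carries a copy of $qK_p$ and so $\vert F_k\odot(qK_p)\vert=(1+qp)\,\vert F_k\vert$, which grows with $k$ because $\vert F_k\vert$ does. Everything therefore reduces to showing that the orbit divisor matrix of $F_k\odot(qK_p)$ is independent of $k$.

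The key step, and the one I expect to be the only genuinely delicate point, is to identify the orbit partition of $F_k\odot(qK_p)$. I claim there are exactly two orbits: the set $\mathcal O_1$ of all $qp\,\vert F_k\vert$ ``gadget'' vertices contributed by the attached copies, and the set $\mathcal O_2$ of the $\vert F_k\vert$ ``hub'' vertices of $F_k$. That $\mathcal O_1$ is a single orbit is the heart of the matter: at a fixed hub $v$ one may freely permute the $q$ copies of $K_p$ and permute the $p$ vertices inside each copy, and every such permutation extends to an automorphism of the corona that fixes $v$ and all of $F_k$ (each move preserves adjacencies, since a gadget vertex is adjacent only to $v$ and to its $K_p$-mates). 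Combining this with the vertex-transitivity of $F_k$, which carries the gadget at $v$ onto the gadget at any other hub, shows that all gadget vertices are automorphically similar. The hubs form a single orbit because each automorphism of $F_k$ extends to the corona, and the two sets cannot merge because gadget vertices have degree $p$ whereas hubs have degree $d+qp$, and these differ since $d+qp\geq d+p>p$.

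Once the orbits are pinned down the computation is routine: a gadget vertex sees $p-1$ other gadget vertices (its $K_p$-mates) and exactly one hub, while a hub sees all $qp$ gadget vertices of its own copy together with $d$ other hubs, so that
$$S=\left[\begin{array}{cc} p-1 & 1\\ qp & d\end{array}\right].$$
This matrix depends only on the fixed data $p,q,d$ and not on $k$, so all the graphs $F_k\odot(qK_p)$ have a common orbit divisor matrix and are pairwise orbitally similar; together with the order growth this proves self-similarity of the sequence. Specializing to $F_k=C_n$ with $d=2$ and load $qK_{p-1}$ then gives part (a), where the matrix reads $\left[\begin{smallmatrix} p-2 & 1\\ q(p-1) & 2\end{smallmatrix}\right]$, which for $p=2$ collapses to the generalized-sun matrix $\left[\begin{smallmatrix} 0 & 1\\ q & 2\end{smallmatrix}\right]$ quoted earlier.
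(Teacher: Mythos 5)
Your proof is correct and takes essentially the same route as the paper: identify the corona as bi-orbital (gadget vertices versus hub vertices, separated by their degrees $p$ and $d+qp$) and observe that the orbit divisor matrix $\left[\begin{smallmatrix} p-1 & 1\\ qp & d\end{smallmatrix}\right]$ depends only on $p,q,d$ and not on $k$, so that orbital similarity plus the growth of $\vert F_k\vert$ gives self-similarity. The only differences are organizational and in your favor: you justify the two-orbit structure with explicit automorphisms (the paper merely asserts bi-orbitality), and you deduce (a) from (b) via the sound identity $G_n(p,q)=C_n\odot(qK_{p-1})$ for $p\geq 2$, whereas the paper proves (a) directly and declares the proof of (b) analogous.
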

\begin{proof} {\it Part} (a). We suppose that $p\geq 2$, otherwise $G_n(p,q)$ is a cycle of order $n$ and we are done. The graph $G_n(p,q)$ is clearly connected and bi-orbital. After the coalescence has taken place, we paint with red the $n$ vertices on $C_n$ and with blue all the remaining vertices. The situation is as follows: each blue vertex is adjacent to $p-2$ blue vertices and adjacent to $1$ red vertex; on the other hand, each red vertex is adjacent to $q(p-1)$ blue vertices and adjacent to $2$ red vertices.
Note that the orbit divisor matrix
$$ S_{G_n(p,q)}=\left[
          \begin{array}{cc}
            p-2&  1\\
            q(p-1) & 2 \\
          \end{array}
        \right]$$
of $G_n(p,q)$ does not depend on $n$. {\it Part (b)}. The proof is analogous. The order of $G_k:=F_k\odot (qK_p)$ increases with $k$ but
$$ S_{G_k}=\left[
          \begin{array}{cc}
            p-1&  1\\
            qp & d \\
          \end{array}
        \right]$$
is independent of $k$.
\end{proof}

               \begin{figure}[!ht]
     \centering
    \includegraphics[width=0.8\textwidth]{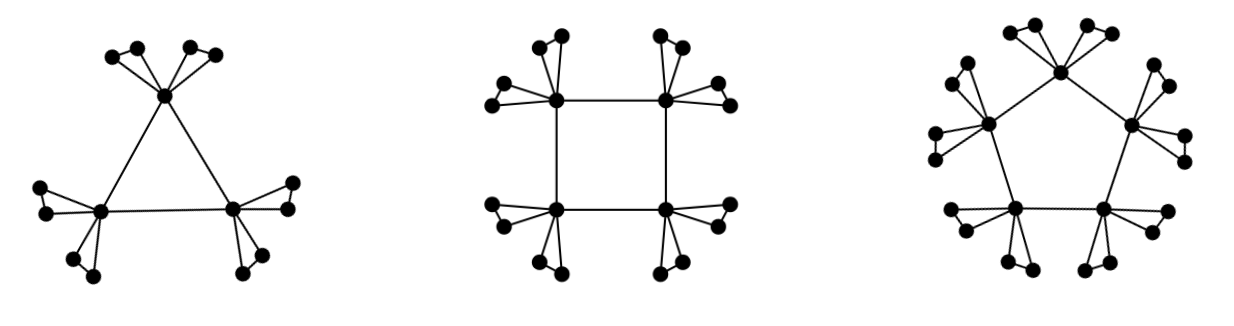}
    \caption{\small   Self-similar sequence of bi-orbital graphs  as in Proposition\,\ref{pr:Gnpq}\,(a).  }\label{Fig:biorbital}
    \end{figure}

\section{Preservation of various  graph invariants}\label{se:preserve}
As a consequence of Theorem\,\ref{th:implications},  a self-similar sequence $\{G_k:k\geq 1\}$ emanating from a connected graph $G$ has the property that ${\rm Ent}(G_k)= {\rm Ent}(G)$
for all $k\geq 1$. Entropy preservation is almost  tautological from the way  self-similarity  has been introduced. What is more striking perhaps is that  self-similarity preserves  also many other fundamental graph invariants.
\subsection{Spectral radius preservation}
 The spectral radius is one of the most important structural parameters of a graph.
We find it particularly interesting  the fact that  a self-similarity sequence preserves the spectral radius of the seed.   Such a result is a  consequence of the next lemma.  
\begin{lemma}\label{le:nice}
 Let $G$ be a connected graph.   Then:
 \begin{itemize}
 \item [(a)] Each  eigenvalue of $S_G$  is  real. In fact, it belongs to the spectrum of  $A_G$.
  \item [(b)] $\varrho(G)$ is equal to the largest eigenvalue of $S_G$.
 \end{itemize}
\end{lemma}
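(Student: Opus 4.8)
The plan is to exploit the equitability of the orbit partition through the indicator (or characteristic) matrix $P\in\mathbb{R}^{n\times\ell}$ whose $(v,i)$ entry equals $1$ if $v\in\mathcal O_i$ and $0$ otherwise. First I would record the intertwining identity
\[
A_G\,P \;=\; P\,S_G,
\]
which is a direct consequence of the defining relation \eqref{defSG}. Indeed, reading off the $(u,j)$ entry of both sides, the left-hand side equals $\sum_{v\in\mathcal O_j}A_G(u,v)=s_{i,j}$ for $u\in\mathcal O_i$ --- this is precisely where equitability of the orbit partition is used, since it guarantees that the count does not depend on the choice of $u\in\mathcal O_i$ --- while the right-hand side returns $s_{i,j}$ because the $u$-th row of $P$ selects row $i$ of $S_G$. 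Since the orbits are nonempty and pairwise disjoint, the columns of $P$ have disjoint supports and are linearly independent; hence $P$ has full column rank and, viewed as a linear map $\mathbb{R}^\ell\to\mathbb{R}^n$, is injective.

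For part (a), I would lift eigenvectors of $S_G$ to eigenvectors of $A_G$. If $S_G\,y=\lambda y$ with $y\neq 0$, then $A_G(Py)=P\,S_G\,y=\lambda(Py)$, and injectivity of $P$ gives $Py\neq 0$, so $\lambda$ is an eigenvalue of $A_G$. Because $A_G$ is a real symmetric matrix, its spectrum is real, and therefore every eigenvalue of $S_G$ is real and lies in the spectrum of $A_G$.

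For part (b), I would descend in the opposite direction using the principal eigenvector. By Proposition\,\ref{pr:perron}, $x_G$ is constant on each orbit, so $x_G=P\,c$ for a vector $c\in\mathbb{R}^\ell$ whose components are the common values of $x_G$ on the orbits; these components are positive. Substituting into $A_G\,x_G=\varrho(G)\,x_G$ and using the intertwining identity yields $P\,S_G\,c=\varrho(G)\,P\,c$, and injectivity of $P$ then gives $S_G\,c=\varrho(G)\,c$. Thus $\varrho(G)$ is an eigenvalue of $S_G$. Combining with part (a), every eigenvalue of $S_G$ belongs to the spectrum of $A_G$ and is hence at most $\varrho(G)$; since $\varrho(G)$ is itself attained, it is the largest eigenvalue of $S_G$.

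The argument is essentially routine once the intertwining identity is in place; the only point demanding genuine care is the twofold use of $P$ --- lifting an arbitrary eigenvector of $S_G$ upward in (a), and pushing the Perron eigenvector of $A_G$ down to $S_G$ in (b) --- both of which hinge on $P$ having full column rank. I expect the main (and only minor) obstacle to be verifying $A_G\,P=P\,S_G$ cleanly, since this is exactly the step that invokes the equitable-partition hypothesis \eqref{defSG}; everything else follows from elementary linear algebra together with the fact, already recorded in Proposition\,\ref{pr:perron} and the Perron--Frobenius theorem, that $x_G$ is positive and constant on orbits.
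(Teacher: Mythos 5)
Your proof is correct, and it is worth noting where it coincides with and where it goes beyond the paper's argument. For part (b) you and the paper do essentially the same thing: the paper takes the Perron eigenvector $x_G$, which by Proposition\,\ref{pr:perron} is constant on orbits with values $\alpha_1,\ldots,\alpha_\ell$, and verifies entrywise that $S_G\alpha=\varrho(G)\alpha$ using $\sum_{j\in\mathcal O_q}a_{i,j}=s_{p,q}$; that computation is exactly your identity $PS_Gc=A_GPc=\varrho(G)Pc$ read one component at a time, so your intertwining matrix $P$ is just a cleaner packaging of the same descent. The genuine difference is part (a): the paper does not prove it but cites Barrett et al.\,\cite[Theorem\,3.2]{BFW}, whereas you supply the standard self-contained argument --- lift an eigenvector $y$ of $S_G$ to $Py$, use full column rank of $P$ to see $Py\neq 0$, and invoke symmetry of $A_G$ for realness. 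This buys a uniform treatment in which both parts flow from the single identity $A_GP=PS_G$, and it also makes explicit a step the paper leaves implicit: its phrase ``(b) follows'' from the claim $S_G\alpha=\varrho(G)\alpha$ tacitly uses (a) to bound all eigenvalues of $S_G$ by $\varrho(G)$, which you spell out. One small refinement you should add: since $S_G$ is not symmetric, an eigenvalue $\lambda$ may a priori be complex with eigenvector $y\in\mathbb{C}^\ell$; your lifting argument works verbatim over $\mathbb{C}$ (the columns of $P$ remain linearly independent as complex vectors), so realness of $\lambda$ is then correctly deduced from $\lambda$ being an eigenvalue of the real symmetric matrix $A_G$, rather than assumed when choosing $y$.
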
 \color{black} 
\begin{proof} 
 Part (a) \color{black} is a particular case of a more general result concerning the divisor matrix associated to an equitable partition,  cf.\,Barrett at al.\,\cite[Theorem\,3.2]{BFW}.   Part (b) \color{black} is obtained as a consequence of Cvetkovi\'{c} et al.\,\cite[Corollary 2.4.6]{CRS}, but, for the sake of the exposition, we give a short proof.  For alleviating notation, we use  $V_G=\{1,\ldots,n\}$ as vertex set. The orbit partition
 $$ V_G= \underbrace{\{i^{(1)}_1,\ldots,i^{(1)}_{m_1}\}}_{\mathcal O_1} \;\cup \;\underbrace{\{i^{(2)}_1,\ldots,i^{(2)}_{m_2}\}}_{\mathcal O_2}\; \cup \ldots \cup \;\underbrace{\{i^{(\ell)}_1,\ldots, i^{(\ell)}_{m_\ell}\}}_{\mathcal O_\ell}$$
 is made of  $m_1$ blue vertices, $m_2$  green  vertices, and so on.
Let $A_G= [a_{i,j}]$ be the adjacency matrix of $G$ and $x$ be its principal eigenvector. So, the components of  $x$ are   positive, sum up to $1$,  and  $A_Gx=\varrho(G)x$. Thanks to Proposition\,\ref{pr:perron},  $x$ has the special form
 \begin{equation}\label{perron}
 x= (\alpha_1,\ldots,\alpha_1,\alpha_2,\ldots,\alpha_2, \ldots,\alpha_\ell,\ldots,\alpha_\ell)^\top,
 \end{equation}
 where the component (or color)  $\alpha_i$ appears $m_i$ times. We claim that
 \begin{equation} \label{bigclaim} S_G\alpha=\varrho(G)\alpha\,,
 \end{equation}
  where $\alpha:=(\alpha_1,\ldots,\alpha_{\ell})^\top$.
  If such a claim is true, then $\varrho(G)$ is an eigenvalue of $S_G$ and (b) follows. The proof of (\ref{bigclaim}) relies on the fact that $\sum_{j\in \mathcal O_q}a_{i,j}=s_{p,q}$
  for all $p,q\in\{1,\ldots,\ell\}$ and   $i\in \mathcal O_p$. These relations are just a rewriting of (\ref{defSG}) with obvious changes in notation.
 Thus, for $p\in\{1,\ldots,\ell\}$ and $i\in \mathcal O_p$,  we get
 \begin{eqnarray}\label{sum}
 (S_G\alpha)_p&=&\sum_{k=1}^\ell s_{p,k}\alpha_k=\sum_{k=1}^\ell \sum_{j\in \mathcal O_k}a_{i,j}\alpha_k
 =\sum_{j=1}^n a_{i,j}x_j\\[1.6mm]\label{sumbis}
 &=&(A_Gx)_i=\varrho(G) x_i= \varrho(G) \alpha_p= (\varrho(G)\alpha)_p\,.
 \end{eqnarray}
The last equality in (\ref{sum}) is due to \eqref{perron} and  the block structure of $A_G$, the remaining equalities are clear.
 Since $p$ is arbitrary in (\ref{sum})-(\ref{sumbis}), we  get (\ref{bigclaim})
 as desired.
\end{proof}

We now are ready to state the announced spectral radius preservation result. 
\begin{theorem} \label{th:preserverho} Let $G$ be a connected graph. Then
any connected graph  that is orbitally similar to $G$ has the same spectral radius as $G$.
In particular, if  $\{G_k:k\geq 1\}$ is  a self-similar sequence emanating from  $G$, then
$\varrho(G_k)= \varrho(G)$
for all $k\geq 1$.
\end{theorem}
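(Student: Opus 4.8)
The plan is to reduce everything to Lemma~\ref{le:nice}, which has already carried out the substantive work of tying the spectral radius of a connected graph to the spectrum of its orbit divisor matrix. The only additional ingredient required is the definition of orbital similarity itself, so the argument will be short.

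First I would let $H$ be any connected graph orbitally similar to $G$. By Definition~\ref{de:os}, this means $S_G = S_H$ after suitable labelings of the orbits. Applying Lemma~\ref{le:nice}\,(b) to each graph separately shows that $\varrho(G)$ is the largest eigenvalue of $S_G$ while $\varrho(H)$ is the largest eigenvalue of $S_H$; since the two matrices coincide, they share the same largest eigenvalue, and therefore $\varrho(G)=\varrho(H)$. The one point deserving a moment of care is the labeling ambiguity flagged after Proposition~\ref{pr:permutation}: the equality $S_G=S_H$ may hold only up to a permutation similarity transformation $S_H = P^\top S_G P$. This causes no difficulty, however, because such a transformation preserves the characteristic polynomial, hence the entire spectrum and in particular the largest eigenvalue, so the conclusion $\varrho(G)=\varrho(H)$ survives intact.

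For the statement about self-similar sequences, I would simply invoke condition~(\ref{c2}) in Definition~\ref{de:self}, which (as remarked immediately after that definition) says precisely that each $G_k$ is orbitally similar to the seed $G$. The first part of the theorem then yields $\varrho(G_k)=\varrho(G)$ for every $k\geq 1$. There is no genuine obstacle in this argument: the entire weight of the result rests on Lemma~\ref{le:nice}\,(b), and what remains is pure bookkeeping, namely observing that equal orbit divisor matrices force equal largest eigenvalues.
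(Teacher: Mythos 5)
Your proposal is correct and follows essentially the same route as the paper: the authors likewise deduce the theorem directly from Lemma~\ref{le:nice}\,(b) via $\varrho(G)=\lambda_{\max}(S_G)=\lambda_{\max}(S_H)=\varrho(H)$, dismissing the labeling issue with the remark that the choice of orbit ordering is irrelevant. Your explicit observation that a permutation similarity $S_H=P^\top S_G P$ preserves the spectrum simply spells out that remark, so nothing of substance differs.
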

\begin{proof} This result   follows  from Lemma\,\ref{le:nice}. Indeed, if   $G$ and $H$ are orbitally similar  connected graphs, then
 $ \varrho(G)= \lambda_{\rm max}(S_G)=\lambda_{\rm max}(S_H) =\varrho(H)$.  The way we label the orbits of $G$ and $H$ is  irrelevant.
\end{proof}

  The condition
 $\omega(G)=  \omega(H)$ alone does not suffice to ensure that  $G$ and $H$ have the same spectral radius. In fact,  the difference between $\varrho(G)$ and $\varrho(H)$ can be made as large as one wishes.  For instance, if  we compare the spectral radiuses of the vertex-transitive graphs $C_n$ and $K_n$, we see that $\varrho(C_n)=2$, whereas  $\varrho(K_n)=n-1$ goes to infinity with $n$.

\subsection{Preservation of other graph invariants}
Spectral radius preservation of the seed is a remarkable feature  of self-similarity. Less astonishing perhaps is the preservation of the minimum degree $\delta(G)$, the maximum degree $\Delta(G)$, the average degree
$$ d_{\rm ave}(G):= \frac{1}{\vert G\vert} \sum_{v\in V_G} d_G(v),$$
and the  degree-variance
\begin{equation} \label{var}
 d_{\rm var}(G):= \frac{1}{\vert G\vert} \sum_{v\in V_G} \left[d_G(v)-d_{\rm ave}(G) \right]^2
 \end{equation}
of the seed  $G$.  The degree-variance (\ref{var}) is a graph invariant proposed by Bell\,\cite{Bel} as measure of irregularity of a graph. Note that $d_{\rm var}(G)=0$ exactly when $G$ is regular.
Another  interesting graph invariant preserved in a self-similar sequence is the principal ratio
\begin{equation}\label{gamma}
 \gamma(G):= \frac{\max_{v\in V_G} x_G(v)}{\min_{v\in V_G} x_G(v)}\,.
 \end{equation}
  The  name of principal ratio is because
  (\ref{gamma}) corresponds to the ratio between the largest and the smallest component of the principal  eigenvector of $G$.
 The expression (\ref{gamma}) is well defined for any connected graph and can be used as an alternative  measure of irregularity, cf.\,Cioab\u{a} and Gregory\,\cite{CG}. We start by writing a lemma.
\begin{lemma} \label{le:useful} Let $G$ and $H$ be orbitally similar connected graphs.
Then:
\begin{itemize}
\item [(a)] $\delta(G) = \delta(H)$ and  $\Delta(G)= \Delta(H)$.
\item [(b)]  $d_{\rm ave}(G)= d_{\rm ave}(H)$ and  $d_{\rm var}(G)= d_{\rm var}(H)$.
\item [(c)]  $\gamma(G)=  \gamma(H)$.
\end{itemize}
\end{lemma}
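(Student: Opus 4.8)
The plan is to exploit two facts that are already in place: both the degree of a vertex and the value of the principal eigenvector at a vertex are constant on each orbit, and both are governed entirely by the orbit divisor matrix. I would fix labelings of the orbits so that $S_G = S_H$ holds literally, as permitted by Proposition\,\ref{pr:permutation} and Definition\,\ref{de:os}, and write $s_{i,j}$ for the common entries. Setting $r_i := \sum_{j=1}^\ell s_{i,j}$ to be the $i$-th row sum, the defining relation (\ref{defSG}) shows that every vertex of $\mathcal O_i$ has degree exactly $r_i$. Thus the whole degree multiset of $G$ is encoded in the pairs $(r_i, |\mathcal O_i|)$, and likewise for $H$.

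For part (a) I would simply note that the distinct degrees occurring in $G$ are precisely $r_1,\ldots,r_\ell$, so $\delta(G)=\min_i r_i$ and $\Delta(G)=\max_i r_i$; since these row sums are read off from $S_G=S_H$, they coincide for $G$ and $H$. For part (b) the extra ingredient is equation (\ref{claima}) from the proof of Theorem\,\ref{th:implications}: with the labelings making $S_G=S_H$, the relative orbit sizes agree index by index, i.e. $\omega_i := |\mathcal O_i|/|G| = |\mathcal Q_i|/|H|$ for every $i$. Consequently $\frac{1}{|G|}\sum_{v\in V_G} f(d_G(v)) = \sum_{i=1}^\ell \omega_i f(r_i)$ for any function $f$, so both moments of the degree sequence depend only on the common data $(\omega_i,r_i)$. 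Taking $f(x)=x$ gives $d_{\rm ave}(G)=d_{\rm ave}(H)$, and then $d_{\rm var}= (\text{second moment})-(\text{first moment})^2$ yields $d_{\rm var}(G)=d_{\rm var}(H)$.

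Part (c) is where the real work lies. By Proposition\,\ref{pr:perron} the principal eigenvector $x_G$ is constant on each orbit; writing $\alpha_i$ for its value on $\mathcal O_i$, we have $\gamma(G)=(\max_i \alpha_i)/(\min_i \alpha_i)$, and analogously for $H$. By the identity (\ref{bigclaim}) of Lemma\,\ref{le:nice}, the positive vector $\alpha=(\alpha_1,\ldots,\alpha_\ell)^\top$ satisfies $S_G\alpha=\varrho(G)\alpha$ with $\varrho(G)=\lambda_{\rm max}(S_G)$, so $\alpha$ is a positive Perron eigenvector of $S_G$. The main obstacle is to show that such a vector is unique up to a positive scalar. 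For this I would verify that $S_G$ is irreducible: its support pattern is symmetric, since $s_{i,j}=0$ forces $s_{j,i}=0$ (no edges between $\mathcal O_i$ and $\mathcal O_j$ in either direction), and connectivity of $G$ makes the associated undirected graph on $\{1,\ldots,\ell\}$ connected — exactly the orbit-path argument already used to obtain (\ref{suc}). An irreducible nonnegative matrix has, by Perron--Frobenius, a one-dimensional Perron eigenspace spanned by a positive vector.

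It then follows that the positive eigenvectors $\alpha^G$ and $\alpha^H$ of the common matrix $S_G=S_H$ are proportional, say $\alpha^H=c\,\alpha^G$ with $c>0$. Since the ratio $(\max_i \alpha_i)/(\min_i \alpha_i)$ is invariant under positive scaling, we obtain $\gamma(G)=\gamma(H)$. The only delicate point is the irreducibility of $S_G$; everything else is bookkeeping with row sums and weighted averages.
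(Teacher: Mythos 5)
Your proof is correct and follows essentially the same route as the paper: row sums of the common orbit divisor matrix for (a), the index-by-index equality of relative orbit sizes (equation (\ref{claima})) combined with orbit degrees to match degree moments for (b), and Perron--Frobenius uniqueness of the positive eigenvector of the common quotient matrix for (c). The only difference is that you spell out the irreducibility of $S_G$ (symmetric zero pattern via $|\mathcal O_i|s_{i,j}=|\mathcal O_j|s_{j,i}$, plus connectivity of the quotient graph), a step the paper's proof of Lemma\,\ref{le:useful} asserts without justification --- a welcome tightening, not a divergence.
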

\begin{proof} We  label the orbits $\mathcal O_1,\ldots,  \mathcal O_\ell$  of $G$ as in (\ref{cardi}) and, afterwards, we label the orbits  $\mathcal Q_1,\ldots,  \mathcal Q_\ell$ of $H$  so as to have
$S_G=S_H= [s_{i,j}]$. Let  $i\in \{1,\ldots, \ell\}$. The vertices in $\mathcal O_i$ are automorphically similar and therefore they have the same degree, namely,
$
d_G(\mathcal O_i):= \sum_{j=1}^\ell s_{i,j}.
$
 So, it makes sense to say that $d_G(\mathcal O_i)$ is the degree of the orbit $\mathcal O_i$. Since $S_G=S_H$, we have
\begin{equation}\label{dodq}
d_G(\mathcal O_i)= d_H(\mathcal Q_i) \qquad \mbox{ for all }i\in \{1,\ldots, \ell\}.
\end{equation}
This observation yields
\begin{eqnarray*}
\delta(G)= \min_{v\in V_G} d_G(u) =  \min_{1\leq i\leq \ell} d_G(\mathcal O_i)
= \min_{1\leq i\leq \ell} d_H(\mathcal Q_i)= \delta(H)
\end{eqnarray*}
and similarly for the  maximum degree. For proving  (b) it suffices to check that
$M_r(G)=M_r(H),$
where $$M_r(G):= \frac{1}{\vert G\vert} \sum_{v\in V_G} [d_G(v)]^r$$ is the non-centralized $r$-th moment of $G$.
For all positive integer $r$, we have
\begin{eqnarray*}
\vert G \vert \,M_r(G)= \sum_{v\in V_G} [d_G(v)]^r = \sum_{i=1}^\ell \sum_{v\in \mathcal O_i} [d_G(v)]^r= \sum_{i=1}^\ell \vert \mathcal O_i\vert  \,[d_G(\mathcal O_i)]^r.
\end{eqnarray*}
Hence,
\begin{equation*}
M_r(G)\,=\,  \sum_{i=1}^\ell \left(\frac{\vert \mathcal O_i\vert}{\vert G \vert}\right)  [d_G(\mathcal O_i)]^r \;= \;\sum_{i=1}^\ell \left(\frac{\vert \mathcal Q_i\vert}{\vert H \vert}\right)  [d_H(\mathcal Q_i)]^r\;=\; M_r(H),
\end{equation*}
where the second  equality is obtained by combining (\ref{claima}) and (\ref{dodq}).
Finally, we  examine the case of the principal ratio. Lemma\,\ref{le:nice} shows  that $\varrho(G)$ is the largest eigenvalue of $S:=[s_{i,j}]$. From the proof of Lemma\,\ref{le:nice}, we  know that the principal eigenvectors of $G$ and $H$ have  the form
\begin{eqnarray*}
x_G&=& (\overbrace{\alpha_1,\ldots,\alpha_1}^{n_1(G)},\,\overbrace{\alpha_2,\,\ldots,\alpha_2}^{n_2(G)},\, \ldots,\,\overbrace{\alpha_\ell,\ldots,\alpha_\ell}^{n_\ell(G)}\,)^\top,\\ [1.4mm]
x_H&=& (\underbrace{\beta_1,\ldots,\beta_1}_{n_1(H)},\,\underbrace{\beta_2,\ldots,\beta_2}_{n_2(H)}, \, \ldots,\,\underbrace{\beta_\ell,\ldots,\beta_\ell}_{n_\ell(H)}\,)^\top,
\end{eqnarray*}
and that
 $S\alpha =\varrho(G)\alpha$ and $S\beta =\varrho(G)\beta$
 with $\alpha:=(\alpha_1,\ldots,\alpha_\ell)^\top$ and $\beta:=(\beta_1,\ldots,\beta_\ell)^\top$. Beware that $x_G$ and $x_H$ do not have  necessarily   the same number of components. Anyway,  since  $S$ is an irreducible nonnegative matrix, we have   $\alpha= t\beta$ for some positive scalar $t>0$. Hence,
$$\gamma(G)=\frac{\max\{\alpha_1,\ldots,\alpha_\ell\}}{\min\{\alpha_1,\ldots,\alpha_\ell\}}
=\frac{\max\{t\beta_1,\ldots,t\beta_\ell\}}{\min\{t\beta_1,\ldots,t\beta_\ell\}}
=\frac{\max\{\beta_1,\ldots,\beta_\ell\}}{\min\{\beta_1,\ldots,\beta_\ell\}}=\gamma(H),$$
as desired.
\end{proof}

 In view  of  Lemma\,\ref{le:useful}, the next theorem is  evident and, therefore,  it it given without proof.

\begin{theorem} \label{th:other} Let $G$ be a connected graph and  $\{G_k:k\geq 1\}$ be  a self-similar sequence emanating from  $G$.  Then
$$\Phi(G_k)= \Phi(G)\qquad \mbox{for all } \,k\geq 1,$$
where $\Phi$ is any of the following graph invariants: minimum degree, maximum degree, average  degree, degree-variance, principal ratio.
\end{theorem}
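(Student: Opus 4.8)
The plan is to combine Lemma \ref{le:useful} with the defining conditions (\ref{c2}) and (\ref{c3}) of a self-similar sequence, reducing everything to a single orbital-similarity comparison. First I would record the trivial but essential observation that each of the five listed invariants---$\delta$, $\Delta$, $d_{\rm ave}$, $d_{\rm var}$, and $\gamma$---is an isomorphism invariant. This is immediate from their definitions: the minimum, maximum, and average degree, as well as the degree-variance (\ref{var}), depend only on the multiset of vertex degrees, while the principal ratio (\ref{gamma}) is a ratio of components of the principal eigenvector, which is intrinsic to the graph up to relabeling of its vertices. Consequently $\Phi(G_1) = \Phi(G)$ follows from condition (\ref{c3}), namely $G_1$ is isomorphic to $G$.

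Next I would fix $k \geq 1$ and invoke condition (\ref{c2}): since $\{G_k : k \geq 1\}$ is self-similar, the graphs $G_k$ and $G_1$ are orbitally similar. Applying Lemma \ref{le:useful} to the pair $(G_k, G_1)$ then yields at once $\delta(G_k) = \delta(G_1)$, $\Delta(G_k) = \Delta(G_1)$, $d_{\rm ave}(G_k) = d_{\rm ave}(G_1)$, $d_{\rm var}(G_k) = d_{\rm var}(G_1)$, and $\gamma(G_k) = \gamma(G_1)$. Chaining this with the isomorphism equality $\Phi(G_1) = \Phi(G)$ gives $\Phi(G_k) = \Phi(G)$ for every $k$ and for each of the five invariants simultaneously.

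There is essentially no obstacle here; the entire difficulty has been front-loaded into Lemma \ref{le:useful}, whose proof already carried out the delicate work---relating degrees through the common orbit divisor matrix via the identity (\ref{dodq}), rescaling orbit cardinalities through (\ref{claima}) to handle the average degree and its higher moments, and exploiting irreducibility of the shared divisor matrix to identify the two principal eigenvectors up to a positive scalar for the principal ratio. The only point worth a moment's care is that orbital similarity does \emph{not} preserve $\vert G_k\vert$, so one must avoid comparing raw quantities such as $\sum_{v\in V_{G_k}} d_{G_k}(v)$ or individual eigenvector components; it is precisely the normalized, scale-free character of the five invariants that makes them survive passage to a larger graph. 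Once this is noted, the theorem is an immediate corollary, which is why it may reasonably be stated without a separate detailed argument.
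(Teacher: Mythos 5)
Your proposal is correct and matches the paper's intent exactly: the paper states that, in view of Lemma~\ref{le:useful}, the theorem is evident and gives no proof, and your write-up simply makes explicit the routine steps the authors left implicit (invoking condition~(\ref{c2}) for the pair $(G_k,G_1)$, condition~(\ref{c3}) together with isomorphism-invariance of the five quantities, and then Lemma~\ref{le:useful}). Nothing further is needed.
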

\vskip 0,1cm
 The  celebrated degree sum formula implies that
\begin{equation}
\label{kappa}   \frac{ e_{G}}{\vert G\vert}\,=\, \frac{1}{2}\, d_{\rm ave}(G)\,,
\end{equation}
where $e_G$ is the  number of edges of $G$.
The edge-vertex ratio  (\ref{kappa}) is a graph invariant that measures the  ``density'' of $G$.  If such a  ratio is big, then the graph has a lot of edges compared to the number of vertices.  A more popular way of measuring the density  of a graph is by using the standard density index
\begin{equation}\label{dens}
{\rm dens}(G):= \frac{2 e_G}{\vert G\vert (\vert G\vert- 1)}\,.
\end{equation}
The expression  (\ref{dens})  is a fraction between $0$ and $1$ that tells us what portion of all possible edges are actually realized in the graph.  In contrast to the  edge-vertex ratio, the standard density index is not  preserved in a self-similar sequence. Indeed, 
the next proposition shows that $ {\rm dens}(\cdot)$ is a decreasing function
on any self-similar sequence.
\begin{proposition}\label{pr:density}
Let  $\{G_k: k\geq 1\}$ be  a self-similar sequence.  Then
${\rm dens}(G_k)$ decreases to $0$ as $k$ goes to infinity.
\end{proposition}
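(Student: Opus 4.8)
The plan is to reduce the statement to two ingredients that are already in place: the invariance of the average degree under orbital similarity, and the divergence of the orders $|G_k|$. First I would rewrite the standard density index in terms of the average degree. Combining the definition \eqref{dens} with the degree sum formula \eqref{kappa}, one obtains
\[
{\rm dens}(G_k)=\frac{2e_{G_k}}{|G_k|(|G_k|-1)}=\frac{2}{|G_k|-1}\cdot\frac{e_{G_k}}{|G_k|}=\frac{d_{\rm ave}(G_k)}{|G_k|-1}.
\]
This identity is the whole point: the numerator is a self-similarity invariant, while the denominator grows without bound.

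Next I would exploit self-similarity. By condition \eqref{c2} the graphs $G_1,G_2,\ldots$ are pairwise orbitally similar, so Lemma\,\ref{le:useful}\,(b) yields $d_{\rm ave}(G_k)=d_{\rm ave}(G_1)$ for every $k\geq 1$; denote this common value by $c$. Since each $G_k$ is connected and (excluding a degenerate one-vertex seed) has at least one edge, $c>0$, and this positive value is shared across the whole sequence. Consequently
\[
{\rm dens}(G_k)=\frac{c}{|G_k|-1}.
\]
Finally, the growth condition \eqref{c1} gives $|G_1|<|G_2|<\cdots$, so $|G_k|-1$ is a strictly increasing sequence of positive integers tending to infinity. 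Hence $c/(|G_k|-1)$ is strictly decreasing and converges to $0$, which is exactly the claimed behaviour of ${\rm dens}(G_k)$.

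I do not anticipate a genuine obstacle here: the entire content lies in the algebraic identity ${\rm dens}(G)=d_{\rm ave}(G)/(|G|-1)$ together with the two inputs (invariance of $d_{\rm ave}$ via Lemma\,\ref{le:useful}\,(b) and $|G_k|\to\infty$ via \eqref{c1}). The only point deserving a word of care is the strict positivity of the common average degree $c$, which I would settle by observing that a connected graph on at least two vertices has positive average degree and that orbital similarity forces this value to be the same along the sequence; this is what guarantees that the limit is approached monotonically rather than being identically zero.
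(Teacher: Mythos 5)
Your proof is correct and follows essentially the same route as the paper: the paper factors ${\rm dens}(G_k)=\bigl(e_{G_k}/\vert G_k\vert\bigr)\bigl(2/(\vert G_k\vert-1)\bigr)$ and invokes the invariance of the edge--vertex ratio (equivalently, of $d_{\rm ave}$, via Theorem\,\ref{th:other}) together with $\vert G_k\vert\to\infty$, which is exactly your identity ${\rm dens}(G_k)=d_{\rm ave}(G_k)/(\vert G_k\vert-1)$ combined with Lemma\,\ref{le:useful}\,(b) and condition \eqref{c1}. Your added remark on the strict positivity of the common average degree is a harmless refinement (and the degenerate one-vertex seed is in fact excluded automatically, since by Theorem\,\ref{th:cyclo}\,(a) no tree can seed a self-similar sequence).
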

\begin{proof}
It suffices to write
$$ {\rm dens}(G_k)=  \left( \frac{ e_{G_k}}{\vert G_k\vert}\right)\left( \frac{2 }{ \vert G_k\vert- 1}\right) =  \left( \frac{ e_{G_1}}{\vert G_1\vert}\right)\left( \frac{2 }{ \vert G_k\vert- 1}\right)$$
and observe that $\vert G_k\vert$ grows to infinity with $k$.
\end{proof}

\subsection{Self-similar sequences and cyclicity}

A self-similar sequence may not preserve the cyclomatic number
$ c(G):= e_G-\vert G\vert +1 $
of the seed $G$. For instance, Figure\,\ref{Fig:biorbital}  displays the case of a self-similar sequence $\{G_k: k\geq 1\}$ of connected graphs with cyclomatic number $c(G_k)$ going to infinity with $k$. In general, whether the cyclomatic number  $c(G)$ of the seed $G$ is preserved or not depends on the seed itself. The next theorem fully clarifies this issue. There are three cases for consideration:
The case     $c(G)=0$ occurs when $G$ is a tree, the case $c(G)=1$ occurs when $G$ is unicyclic, and  the case   $c(G)\geq2 $ occurs when $G$ is bicyclic,  tricyclic, etcetera.

   \begin{theorem}\label{th:cyclo} Let $G$ be a connected graph.
   \begin{itemize}
   \item [(a)] If  $G$ is a tree,   then there is no self-similar sequence emanating from $G$.
   \item [(b)] If  $G$ is  a unicyclic graph and  $\{G_k: k\geq 1\}$ is a self-similar sequence  emanating from $G$, then each $G_k$ is a unicyclic graph.
          \item [(c)] If  $c(G)\geq 2$  and $\{G_k: k\geq 1\}$ is a self-similar sequence  emanating from $G$, then  $c(G_k)$ increases  to infinity as $k\to \infty$.
   \end{itemize}
   \end{theorem}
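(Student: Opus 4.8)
The plan is to reduce all three parts to a single invariant that survives a change of scale: the edge--vertex ratio $e_G/\vert G\vert$. Orbitally similar connected graphs have the same average degree by Lemma~\ref{le:useful}(b), and the degree sum formula~(\ref{kappa}) identifies $e_G/\vert G\vert = \tfrac12\, d_{\rm ave}(G)$; hence orbital similarity preserves the edge--vertex ratio. Consequently, if $\{G_k: k\geq 1\}$ is a self-similar sequence emanating from $G$, then every term satisfies $e_{G_k}/\vert G_k\vert = e_G/\vert G\vert =: \rho$, a constant independent of $k$.

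First I would re-express the cyclomatic number through this constant. Writing $e_{G_k} = \rho\,\vert G_k\vert$ gives
\[
c(G_k) = e_{G_k} - \vert G_k\vert + 1 = (\rho - 1)\vert G_k\vert + 1,
\]
so the behaviour of $c(G_k)$ is dictated solely by the sign of $\rho - 1$. For a connected seed one has $\rho = 1 + (c(G)-1)/\vert G\vert$, and this sign is therefore pinned down by the cyclomatic number of the seed itself.

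The three cases then follow immediately. If $G$ is a tree, then $c(G)=0$, so $\rho - 1 = -1/\vert G\vert < 0$ and $c(G_k) = 1 - \vert G_k\vert/\vert G\vert$; but the growth condition~(\ref{c1}) forces $\vert G_2\vert > \vert G\vert$, which would make $c(G_2)<0$, impossible for a connected graph, so no such sequence exists and (a) holds. If $G$ is unicyclic, then $c(G)=1$, hence $\rho=1$ and $c(G_k)=1$ for every $k$, so each $G_k$ is unicyclic, proving (b). If $c(G)\geq 2$, then $\rho - 1 = (c(G)-1)/\vert G\vert > 0$, so $c(G_k) = (\rho-1)\vert G_k\vert + 1$ is strictly increasing in $\vert G_k\vert$; since~(\ref{c1}) drives $\vert G_k\vert \to \infty$, we obtain $c(G_k)\to\infty$, proving (c).

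There is no serious obstacle here once the preservation of the edge--vertex ratio is recognized as the governing fact; the whole argument is essentially one short computation built on Lemma~\ref{le:useful} and~(\ref{kappa}). The only point requiring mild care is part~(a), where I must invoke that a self-similar sequence is infinite with strictly increasing orders, so that a single later term of order exceeding $\vert G\vert$ already yields a negative cyclomatic number and hence the contradiction that rules the sequence out.
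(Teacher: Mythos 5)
Your proposal is correct and follows essentially the same route as the paper: both arguments rest on the preservation of the edge--vertex ratio (via Lemma~\ref{le:useful}(b) and the degree sum formula~(\ref{kappa})), and your identity $c(G_k)=(\rho-1)\vert G_k\vert+1$ is exactly the paper's relation $c_k-1=(c_1-1)\vert G_k\vert/n$, with the same three-way case analysis on the sign of $c(G)-1$. The only cosmetic difference is in part~(a), where you reach the contradiction directly from $c(G_2)<0$, while the paper first deduces $c_k=0$ and then $\vert G_k\vert=n$ against the growth condition~(\ref{c1}) --- the two are interchangeable.
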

   \begin{proof}
  Let $G$ be connected graph with $n$ vertices and $m$ edges.
Let $\{G_k: k\geq 1\}$ be  a self similar sequence emanating from $G$.  Since $G_1$ is isomorphic to $G$, the term  $c_1:=c(G_1)$ is equal to $m-n+1$.  Pick any $k\geq 2$. The term  $c_k:=c(G_k)$ satisfies the relation
$$ \frac{c_k+ \vert G_k\vert -1}{\vert G_k\vert}=  \frac{c_1+ n -1}{n}$$
because  $G_k$ has the same edge-vertex ratio as $G_1$, cf.\,Theorem\,\ref{th:other}.  After simplification, we get
\begin{equation}\label{basic}
  c_k-1= (c_1-1) \,\frac{\vert G_k\vert}{n}\,.
  \end{equation}
 Several conclusions can be drawn from (\ref{basic}). For instance, if  $c_1=1$, then $c_k=1$. This observation takes care of part (b).
  Suppose now that  $c_1=0$. In such a case, the term on the right-hand side of (\ref{basic}) is negative. It follows that $c_k=0$ and, a posteriori, $\vert G_k\vert=n$. Since this equality contradicts  the growth condition (\ref{c1}), we deduce $c_1$ cannot be zero. This observation   settles (a). Finally, suppose that $c_1\geq 2$.  In such a case, $c_1-1 >0$ and the term on the right-hand side of (\ref{basic}) increases to infinity as $k\to \infty$. Hence, so does the cyclomatic number  of the graph $G_k$.
\end{proof}

\vskip 0,1cm

Hence, the cyclomatic number  of a seed is preserved by a self-similar sequence if and only if the seed is unicyclic. What we retain from Theorem\,\ref{th:cyclo} are the following three facts: firstly, a self-similar sequence  contains no tree. Secondly,
in a self-similar sequence, if one graph is unicyclic, then all graphs are unicyclic. And, thirdly, in a self-similar sequence, if one graph is not unicyclic, then  the successive cyclomatic numbers go up to infinity.

%




\end{document}